\theoremstyle{definition}
\newtheorem{example}{Example}[section]
\newtheorem{remark}{Remark}[section]
\theoremstyle{plain}
\newtheorem{theorem}{Theorem}[section]
\providecommand{\prob}{\mathsf P}
\numberwithin{equation}{section}
\newcommand{\ve}{\varepsilon}
\newcommand{\R}{\mathbb R}
\newcommand{\E}{\textsf E}
\newcommand{\Sub}{\mathrm{Sub}}
\begin{document}
\begin{center}
{\Large\bf Investigation of sample paths properties of sub-Gaussian type random fields, with application to stochasic heat equations}
 \end{center}

\begin{center}
{{\bf Olha Hopkalo\\
{\it\small Department of Economic Cybernetics, Faculty of Economics, \\Taras Shevchenko National University of Kyiv, Kyiv, Ukraine, \\E-mail:~olha.hopkalo@knu.ua} \\
 Lyudmyla Sakhno\\
{\it\small
 Department of Probability Theory, Statistics and Actuarial Mathematics, \\Faculty of Mechanics and Mathematics, \\Taras Shevchenko National University of Kyiv, Kyiv, Ukraine, \\E-mail:~lyudmyla.sakhno@knu.ua}    
  }}
\end{center}

\begin{abstract} 
\noindent The paper presents bounds for the distributions of suprema for a particular class of sub-Gaussian type random fields defined over spaces with anisotropic metrics. The results are applied to random fields related to stochastic heat equations with fractional noise:  bounds for the tail distributions of suprema and estimates for the rate of growth are provided for such fields. 

\noindent{\it Key words:}  sub-Gaussian type random fields, distribution of supremum, rate of growth, stochastic heat equation, fractional noise

\end{abstract}

\section{Introduction and preliminaries}

\bigskip

\noindent{\bf Aim and motivation.} 
In this paper, we study sample paths properties of a class of sub-Gaussian type random fields $X(t)$, $t\in T$, focusing on the case where a parameter set $T$ is endowed with an anisotropic metric and imposing some kind of H\"{o}lder continuity condition on the field $X$. Our aim is to establish upper bounds for the distribution of the supremum $P \big\{ \sup_{t\in T} |X(t)| > u \big\}$ for bounded $T$ and to evaluate a rate of growth of $X$ over an unbounded domain $V$ by considering upper bounds for $P \big\{ \sup_{t\in V} \frac{|X(t)|}{f(t)} > u \big\}$ for a properly chosen continuous function $f$. The study is motivated by applications to random fields related to stochastic heat equations. Extensive recent investigations of such equations resulted, in particular,  in establishing the H\"{o}lder continuity of solutions in various settings. It is quite natural and appealing to make further steps towards considerarion of different functionals of solutions. We evaluate the distribution of suprema of solutions and their asymptotic rate of growth.

\medskip
\noindent{\bf Approach and tools.} We present bounds for distributions of suprema assuming $X$ to belong to a particular class of $\varphi$-sub-Gaussian random fields (to be defined below), which provides a generalization of Gaussian and sub-Gaussian fields. To derive the results we apply entropy methods. Recall that the entropy approach in studing sample paths of a stochastic process $X(t)$, $t\in T$, requires to evaluate entropy characteristics of the set $T$ with respect to a particular metrics generated by the process $X$. The origins of this approach are due to Dudley, who stated coditions for boundedness  of
Gaussian processes in the form of convergence of metric entropy integrals (which we call now Dudley entropy integrals). We address for  corresponding references, e.g., to \cite{Adler} and \cite{BK}, where in the latter one the entropy approach was extended to different classes of processes, more general than Gaussian ones.

\medskip
\noindent{\bf Some facts from the general theory of $\varphi$-sub-Gaussian random variables and fields.}
Note that it is important for applications to go beyond the Gaussianity assumption in considered models and possible extensions are provided by sub-Gaussian and $\varphi$-sub-Gaussian random processes and fields.  Recall that a random variable $\xi$ is sub-Gaussian if its moment generating function is majorized by that of a Gaussian centered random variable $\eta\sim N(0,\sigma^2)$: $$\E\exp(\lambda\xi)\le \E\exp(\lambda\eta)=\exp(\sigma^2\lambda^2/2).$$ 
The generalization of this notion to the classes of $\varphi$-sub-Gaussian random variables is introduced as follows (see, \cite[Ch.2]{BK}, \cite{GKN}, \cite{KO}, \cite{Vasylyk}).

Consider a continuous even convex
function $\varphi$ such that
$\varphi(0)=0$, $\varphi(x)>0$ as $x\neq0$ and 
$\lim\limits_{
        x\to 0}\frac{\varphi(x)}{x}=0$, $\lim\limits_{
x\rightarrow \infty 
}\frac{\varphi(x)}{x}=\infty.$ Note that such functions are called  Orlicz N-functions. Suppose that  $\varphi$  satisfies 
  additionally:
    $\lim \inf\limits_{
            x\to 0 
    }\frac{\varphi(x)}{x^2}=c>0,$ where
 the case $c=\infty$ is possible.
 
Let $\varphi$ be the function with the above properties and $\{\Omega, \cal{F}, \prob \}$ be a standard probability space. The random variable $\zeta$ is  $\varphi$-sub-Gaussian, or belongs to the space $\Sub_{\varphi} (\Omega)$,   if $\E \zeta=0,$ $ \E \exp\{\lambda\zeta\}$ exists for all $\lambda\in\R$ and there exists a constant  $a>0$ such that the following inequality holds for all $\lambda\in\R$
\begin{equation}\label{def1}\E \exp\{\lambda \zeta\} \leq \exp \{\varphi(\lambda a)\}.\end{equation}
The  random field $X(t)$, $t\in T$, is called  $\varphi$-sub-Gaussian  if the  random variables
$\{X(t), t\in T\}$ are  $\varphi$-sub-Gaussian.

The space $ \Sub_\varphi(\Omega)$ is a Banach space with respect to
the norm (see \cite{GKN,KO}):
$$ \tau_\varphi (\zeta) = \inf\{a > 0: \E \exp \{\lambda \zeta \} \leq
\exp\{ \varphi (a \lambda)\}.$$

 For $\varphi$-sub-Gaussian random variable $\zeta$  the following estimate for its tail probability holds:
\begin{equation}\label{tail}
P\{|\zeta|>u\}\le 2\exp\left\{-\varphi^*\left(\frac{u}{\tau_\varphi(\zeta)}\right)\right\}, \,\,u>0,
\end{equation}
where the function $\varphi^*$
defined by
 $\varphi^*(x)=\sup_{\substack{
       y\in \R}}(xy-\varphi(y))$
 is called  the Young-Fenchel transform (or Legendre transform, or convex conjugate) of the function $\varphi$.  
It was stated in \cite{BK} (Corollary 4.1, p. 68) that, moreover, a random variable
$\zeta$ is a $\varphi$-sub-Gaussian if and only if  $\E\zeta=0$ and there exist constants $C>0$, $D>0$ such that 
\begin{equation}\label{tail1}
P\{|\zeta|>u\}\le C\exp\left\{-\varphi^*\left(\frac{u}{D}\right)\right\}.
\end{equation}

This second characterization of $\varphi$-sub-Gaussian random variable by the tail behavior of its distribution is important for practical applications.

The class of $\varphi$-sub-Gaussian random variables includes  centered compactly supported distributions, reflected Weibull distributions, centered bounded distributions, Gaussian, Poisson distributions. In the case when $\varphi=\frac{x^2}{2}$, the notion of $\varphi$-sub-Gaussianity reduces to the classical sub-Gaussianity.
The main theory for the spaces of $\varphi$-sub-Gaussian random variables and stochastic processes was presented in \cite{BK, GKN, KO} followed by numerous further studies. 
 Various  classes of $\varphi$-sub-Gaussian processes and fields were studied, in particular, in \cite{BKOS, HS, KO2016,  KOSV2018, KOSV, Sakhno2022}.

The property of $\varphi$-sub-Gaussianity allows to evaluate different functionals of the stochastic processes, in particular, the behavior of their suprema.

Estimates for distribution of supremum $P \{\sup_{t\in T} |X(t)| > u\}$ of $\varphi$-sub-Gaussian stochastic process $X$ were derived in various forms in the monograph \cite{BK} basing on entropy methods.

We will base our study on the following theorem (see \cite{BK}, Theorems 4.1--4.2, pp. 100, 105).

\begin{theorem} [\cite{BK}]\label{sup_X}
Let $X(t)$, $t \in T,$ be a $\varphi$-sub-Gaussian process and
$\rho_X$ be the pseudometrics  generated by $X$, that is,
$\rho_X (t, s) = \tau_\varphi (X(t) - X(s)),$ $ t, s \in T.$  Suppose further that 

\begin{enumerate}
\item[(i)] \vskip-2truemm the pseudometric space $(T, \rho_X)$ is separable, the process $X$ is separable on $(T, \rho_X)$;

\item[(ii)] \vskip-2truemm $\varepsilon_0 :=\sup\limits_{t\in T} \tau_\varphi (X(t)) < \infty$;

\item[(iii)] \vskip-5truemm
\begin{equation}\label{condI}
 I_\varphi(\varepsilon_0):=\int\limits_0^{\varepsilon_0}
\Psi (\ln(N(v)))\,d v < \infty,
\end{equation}
where $
\Psi(v)=\frac{v}{\varphi^{(-1)}(v)}$ and $N(v)=N_{\rho_X}(v)$, $v>0$, is the metric massiveness of the pseudometric space $(T, \rho_X)$, that is,   the smallest number of  elements in a
$v$-covering of $T$ by closed balls, w.r.t. the metric $\rho_X$, of a radius at most $v$.
\end{enumerate}

\vskip-2truemm 
Then for all $\lambda>0$ and $0 < \theta <1$
\begin{equation}
\E \exp \{\lambda\sup_{{t}\in T}|X(t)| \}
\leq   2Q(\lambda, \theta),
\end{equation}
where
\begin{equation}
Q(\lambda, \theta)=\exp\Big\{ \varphi\Big( \frac{\lambda\varepsilon_0}{1-\theta}\Big) + \frac{2\lambda}{\theta(1-\theta)} I_\varphi (\theta \varepsilon_0)\Big\},
\end{equation}
and for $0 < \theta <1$, $u>\frac{2I_\varphi (\theta \varepsilon_0)}{\theta(1-\theta)}$
\begin{equation} P \big\{ \sup_{ {t} \in T} |X( {t} )| > u \big\}
\leq   2A(u, \theta), 
\end{equation}
 where
\begin{equation} A(u, \theta) = \exp\Big\{ -\varphi^* \Big( \frac{1}{\ve_0} \Big( u(1-\theta) -
\frac{2}{\theta} I_\varphi (\theta \varepsilon_0) \Big) \Big) \Big\}.\end{equation}
\end{theorem}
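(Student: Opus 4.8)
The plan is to prove the exponential-moment bound $\E\exp\{\lambda\sup_{t\in T}|X(t)|\}\le 2Q(\lambda,\theta)$ first, and then to deduce the tail estimate $A(u,\theta)$ from it by the exponential Chebyshev inequality together with the Young--Fenchel transform. For the moment bound I would use the entropy (chaining) method adapted to the Banach space $\Sub_\varphi(\Omega)$. By the separability assumption (i) there is a sequence of finite sets $T_0\subset T_1\subset\cdots\subset T$ whose union is dense, so that $\sup_{t\in T}|X(t)|=\lim_{r\to\infty}\max_{t\in T_r}|X(t)|$ a.s.; by monotone convergence it then suffices to bound $\E\exp\{\lambda\max_{t\in T_r}|X(t)|\}$ uniformly in $r$. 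I would take the geometric scales $\varepsilon_k=\theta^k\varepsilon_0$, let $T_k$ be a minimal $\varepsilon_k$-net so that $\#T_k=N(\varepsilon_k)$, and let $\pi_k(t)\in T_k$ be a nearest point to $t$. The chaining identity $X(t)=X(\pi_0(t))+\sum_{k\ge1}\bigl(X(\pi_k(t))-X(\pi_{k-1}(t))\bigr)$ gives $\max_{t\in T_r}|X(t)|\le M_0+\sum_{k=1}^{r}M_k$, where $M_0=\max_{s\in T_0}|X(s)|$ and $M_k=\max_{t}\bigl|X(\pi_k(t))-X(\pi_{k-1}(t))\bigr|$.

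The per-level input is the exponential moment of a maximum of $\varphi$-sub-Gaussian variables: from \eqref{def1} and $\exp\{\mu\max_j|Y_j|\}\le\sum_j(\exp\{\mu Y_j\}+\exp\{-\mu Y_j\})$ one gets, for any finite family with $\tau_\varphi(Y_j)\le h$, the bound $\E\exp\{\mu\max_j|Y_j|\}\le 2n\exp\{\varphi(\mu h)\}$. At level $k\ge1$ the increments satisfy $\tau_\varphi\bigl(X(\pi_k(t))-X(\pi_{k-1}(t))\bigr)\le\rho_X(\pi_k(t),t)+\rho_X(t,\pi_{k-1}(t))\le\varepsilon_k+\varepsilon_{k-1}\le 2\varepsilon_{k-1}$, and the number of distinct increments is at most $N(\varepsilon_k)N(\varepsilon_{k-1})$. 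The key device is to choose the level multiplier so that $\varphi$ is evaluated at $s_k=\varphi^{(-1)}\bigl(\ln(\cdot)\bigr)$: this turns $\ln N(\varepsilon_k)+\varphi(\cdot)$ into a contribution proportional to $h_k\,\Psi(\ln N(\varepsilon_k))$, precisely because $\Psi(v)=v/\varphi^{(-1)}(v)$. Since $v\mapsto\Psi(\ln N(v))$ is nonincreasing ($N$ is nonincreasing and $\Psi$ nondecreasing), the Riemann-sum comparison $\sum_{k\ge1}(\varepsilon_k-\varepsilon_{k+1})\,\Psi(\ln N(\varepsilon_k))\le\int_0^{\varepsilon_1}\Psi(\ln N(v))\,dv=I_\varphi(\theta\varepsilon_0)$, together with $\varepsilon_k-\varepsilon_{k+1}=(1-\theta)\varepsilon_k$ and the increment scale $h_k\le 2\varepsilon_{k-1}=\tfrac{2}{\theta}\varepsilon_k$, reproduces the coefficient $\tfrac{2}{\theta(1-\theta)}$ in front of $I_\varphi(\theta\varepsilon_0)$.

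To assemble the levels into a single exponential I would combine the per-level estimates (for instance by a generalized H\"older inequality with weights $q_k$ satisfying $\sum_k q_k^{-1}\le1$, or by an iterative estimate along the nets): the weights $\le 1$ produce the overall prefactor $2$, since $\prod_k 2^{1/q_k}=2^{\sum_k q_k^{-1}}\le2$, while the base level $k=0$ is made to carry the term $\varphi\bigl(\tfrac{\lambda\varepsilon_0}{1-\theta}\bigr)$ using $\tau_\varphi(X(s))\le\varepsilon_0$ from (ii). I expect this bookkeeping to be the main obstacle: the weights that optimally convert each chaining level to its linear $\Psi$-contribution depend on $\lambda$, so they must be arranged — splitting off a fixed weight $\tfrac1{1-\theta}$ for $k=0$ and distributing the remainder over $k\ge1$ — so that the summability constraint $\sum_k q_k^{-1}\le1$ is respected while both target terms of $\ln Q(\lambda,\theta)$ emerge with the stated constants. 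Here the finiteness of $I_\varphi(\varepsilon_0)$ in (iii) is exactly what guarantees convergence of the chaining series; letting $r\to\infty$ then yields $\E\exp\{\lambda\sup_{t\in T}|X(t)|\}\le 2Q(\lambda,\theta)$.

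Finally, for the tail bound I would apply the exponential Chebyshev inequality: for every $\lambda>0$, $P\{\sup_{t\in T}|X(t)|>u\}\le e^{-\lambda u}\,\E\exp\{\lambda\sup_{t\in T}|X(t)|\}\le 2\exp\{-\lambda u+\varphi(\tfrac{\lambda\varepsilon_0}{1-\theta})+\tfrac{2\lambda}{\theta(1-\theta)}I_\varphi(\theta\varepsilon_0)\}$. Writing $\mu=\tfrac{\lambda\varepsilon_0}{1-\theta}$, so that $\lambda=\tfrac{\mu(1-\theta)}{\varepsilon_0}$, and optimizing over $\mu>0$, the exponent equals $-\sup_{\mu>0}\bigl\{\mu\,\tfrac{1-\theta}{\varepsilon_0}\bigl(u-\tfrac{2}{\theta(1-\theta)}I_\varphi(\theta\varepsilon_0)\bigr)-\varphi(\mu)\bigr\}$, which is exactly $-\varphi^*\bigl(\tfrac{1}{\varepsilon_0}(u(1-\theta)-\tfrac{2}{\theta}I_\varphi(\theta\varepsilon_0))\bigr)$ provided the argument is positive. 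This gives $P\{\sup_{t\in T}|X(t)|>u\}\le 2A(u,\theta)$; the standing hypothesis $u>\tfrac{2I_\varphi(\theta\varepsilon_0)}{\theta(1-\theta)}$ is precisely the condition ensuring that this argument is positive, so that the convex-conjugate formula for $\varphi^*$ applies and the supremum defining it is attained at $\mu>0$.
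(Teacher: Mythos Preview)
The paper does not prove this theorem at all: Theorem~\ref{sup_X} is quoted verbatim from the monograph \cite{BK} (Theorems~4.1--4.2 there) and is used as an input, not derived. So there is no ``paper's own proof'' to compare against.

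That said, your sketch follows the standard entropy/chaining argument that underlies the result in \cite{BK}: geometric scales $\varepsilon_k=\theta^k\varepsilon_0$, the bound $\E\exp\{\mu\max_j|Y_j|\}\le 2n\exp\{\varphi(\mu h)\}$ for finite maxima, conversion of the $\ln N(\varepsilon_k)$ contributions into $\Psi(\ln N(\varepsilon_k))$ via the choice of level multipliers, a Riemann-sum comparison with $I_\varphi(\theta\varepsilon_0)$, and finally the exponential Chebyshev step with the substitution $\mu=\lambda\varepsilon_0/(1-\theta)$ to recover $\varphi^*$. The second part (the tail bound from the moment bound) is fully correct as written. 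The first part is a plausible outline, and you correctly flag the one genuinely delicate point: arranging the H\"older/iteration weights so that the level $k=0$ produces exactly $\varphi(\lambda\varepsilon_0/(1-\theta))$ while the remaining levels sum to $\tfrac{2\lambda}{\theta(1-\theta)}I_\varphi(\theta\varepsilon_0)$ with the stated constants and the prefactor stays at $2$. In \cite{BK} this bookkeeping is done by an iterative conditional-expectation argument rather than a single generalized H\"older step, which makes the emergence of the precise constants more transparent; your H\"older-with-weights formulation can be made to work but would require more care than you indicate to land on exactly these constants rather than slightly worse ones.
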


In the above theorem and in what follows we denote by $f^{(-1)}$  the inverse function for a function $f$.

The integrals of the form \eqref{condI} are called entropy integrals. Entropy characteristics of the parameter set $T$ with respect to the pseudometric $\rho_X$ generated by the process $X$ and the rate of growth of the metric massiveness $N(v)=N_{\rho_X}(v)$, $v>0$, or metric entropy $H(v):=\ln(N(v)$ are important for the study of sample paths properties of the underlying process $X$ (see \cite{BK}).

Consider now a metric space $(T, d)$, with  an arbitrary metric $d$ and suppose that this metric space is separable. 
Suppose that we can evaluate the metric massivness $N_d$ of $T$ with respect to the metric $d$ and also have a bound for the function   
$\rho_X (t, s) = \tau_\varphi (X(t) - X(s))$  in terms of $d(t,s)$. Then Theorem \ref{sup_X} implies the following result.

\begin{theorem}\label{metric_d}
Let $X(t)$, $t \in T,$ be a $\varphi$-sub-Gaussian process and
$T$ be supplied with a metric $d$.  Assume that 
\begin{enumerate}
\item[(i)] \vskip-2truemm the metric space $(T, d)$ is separable, the process $X$ is separable on $(T, d)$;

\item[(ii)] \vskip-2truemm $\varepsilon_0 :=\sup\limits_{t\in T} \tau_\varphi (X(t)) < \infty$;

\item[(iii)] \vskip-2truemm
there exists a monotonically increasing continuous function $\sigma(h)$, $0 < h \leq \sup_{t,s\in T} d(s,t)$ such
 that $\sigma(h)\rightarrow 0$ as $ h \rightarrow 0 $ and
 \vskip-2truemm
\begin{equation}\label{suptau}
\sup_{\substack{
        d({t},{s})\leq h,\\
        {t}, {s}\in T \\
}}\tau_{\varphi} (X ({t})-
X ({s}))\leq \sigma(h),
\end{equation}
and for $0<\varepsilon\le\gamma_0$
\begin{equation}\label{condI2}
 \widetilde I_\varphi(\varepsilon):=\int\limits_0^{\varepsilon}
\Psi (\ln (N_d(\sigma^{(-1)}(v)))\,d v < \infty,
\end{equation}
where $
\Psi(v)=\frac{v}{\varphi^{(-1)}(v)}$, $N_d(v)$, $v>0$, is the metric massiveness of the metric space $(T, d)$, $\gamma_0=\sigma(\sup_{t,s\in T}d(t,s))$.
\end{enumerate}

\vskip-2truemm
Then the statement of Theorem \ref{sup_X} holds for $\lambda>0$ and $0 < \theta <1$ such that $\theta\varepsilon_0\le\gamma_0$ with $Q(\lambda, \theta)$ and $A(u, \theta)$ changed for the $\widetilde Q(\lambda, \theta)$ and $\widetilde A(u, \theta)$ which correspond to the integral $\widetilde I_\varphi$:
\begin{equation}
\E \exp \{\lambda\sup_{{t}\in T}|X(t)| \}
\leq   2\widetilde Q(\lambda, \theta)
\end{equation}
and for $u>\frac{2\widetilde I_\varphi (\theta \varepsilon_0)}{\theta(1-\theta)}$
\begin{equation} P \big\{ \sup_{ {t} \in T} |X( {t} )| > u \big\}
\leq   2 \widetilde A(u, \theta), 
\end{equation}
 where
 \begin{equation}
\widetilde Q(\lambda, \theta)=\exp\Big\{ \varphi\Big( \frac{\lambda\varepsilon_0}{1-\theta}\Big) + \frac{2\lambda}{\theta(1-\theta)} \widetilde I_\varphi (\theta \varepsilon_0)\Big\},
\end{equation}
\begin{equation} \widetilde A(u, \theta) = \exp\Big\{ -\varphi^* \Big( \frac{1}{\ve_0} \Big( u(1-\theta) -
\frac{2}{\theta} \widetilde I_\varphi (\theta \varepsilon_0) \Big) \Big) \Big\}.\end{equation}
\end{theorem}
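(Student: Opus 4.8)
The plan is to deduce this statement directly from Theorem \ref{sup_X} by comparing the entropy characteristics of $(T,\rho_X)$ with those of $(T,d)$. The only quantity in the bounds $Q(\lambda,\theta)$ and $A(u,\theta)$ that depends on the geometry of the parameter set is the entropy integral $I_\varphi$; the remaining ingredients $\varepsilon_0$, $\varphi$, $\varphi^*$ are unchanged. Hence it suffices to show that, on the relevant range, $I_\varphi$ is dominated by $\widetilde I_\varphi$, and then to observe that both $Q$ and $A$ are monotone in the value of the entropy integral, so that replacing $I_\varphi(\theta\varepsilon_0)$ by the larger $\widetilde I_\varphi(\theta\varepsilon_0)$ only enlarges the right-hand sides and therefore preserves the inequalities.

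First I would translate the H\"older-type condition \eqref{suptau} into a comparison of coverings. Condition \eqref{suptau} says precisely that $d(t,s)\le h$ implies $\rho_X(t,s)=\tau_\varphi(X(t)-X(s))\le\sigma(h)$. Consequently every closed $d$-ball of radius $h$ is contained in the closed $\rho_X$-ball of radius $\sigma(h)$ with the same centre. Taking a minimal $h$-covering of $T$ in the metric $d$ and enlarging each ball in this way produces a covering of $T$ by $N_d(h)$ balls of $\rho_X$-radius $\sigma(h)$, whence
\begin{equation*}
N_{\rho_X}(\sigma(h))\le N_d(h).
\end{equation*}
Since $\sigma$ is continuous and strictly increasing, it is invertible on its range; substituting $v=\sigma(h)$, i.e.\ $h=\sigma^{(-1)}(v)$, yields $N_{\rho_X}(v)\le N_d(\sigma^{(-1)}(v))$ for $0<v\le\gamma_0$.

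Next I would pass from the covering numbers to the integrands. The function $\Psi(v)=v/\varphi^{(-1)}(v)$ is non-decreasing: writing $g=\varphi^{(-1)}$, the convexity of the N-function $\varphi$ makes $g$ concave and increasing with $g(0)=0$, so that $g(v)\ge v\,g'(v)$, and the derivative of $v/g(v)$ equals $(g(v)-v\,g'(v))/g(v)^2\ge0$. Monotonicity of $\Psi$ together with $\ln N_{\rho_X}(v)\le\ln N_d(\sigma^{(-1)}(v))$ gives $\Psi(\ln N_{\rho_X}(v))\le\Psi(\ln N_d(\sigma^{(-1)}(v)))$, and integrating over $(0,\varepsilon]$ yields
\begin{equation*}
I_\varphi(\varepsilon)=\int_0^{\varepsilon}\Psi(\ln N_{\rho_X}(v))\,dv\le\int_0^{\varepsilon}\Psi(\ln N_d(\sigma^{(-1)}(v)))\,dv=\widetilde I_\varphi(\varepsilon),\qquad 0<\varepsilon\le\gamma_0.
\end{equation*}
In particular the finiteness of $\widetilde I_\varphi$ assumed in \eqref{condI2} forces the integrand $\Psi(\ln N_{\rho_X}(v))$ to be integrable near the origin, so hypothesis (iii) of Theorem \ref{sup_X} holds and that theorem applies.

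It then remains to insert the comparison into the bounds of Theorem \ref{sup_X}. Because $\varphi$ is increasing on $[0,\infty)$, the quantity $Q(\lambda,\theta)$ is increasing in $I_\varphi(\theta\varepsilon_0)$; and because $\varphi^*$ is increasing on $[0,\infty)$ while $I_\varphi(\theta\varepsilon_0)$ enters $A(u,\theta)$ with a negative sign inside $\varphi^*$, the quantity $A(u,\theta)$ is likewise increasing in $I_\varphi(\theta\varepsilon_0)$. Replacing $I_\varphi(\theta\varepsilon_0)$ by $\widetilde I_\varphi(\theta\varepsilon_0)\ge I_\varphi(\theta\varepsilon_0)$ therefore turns $Q,A$ into the larger quantities $\widetilde Q,\widetilde A$, and on the (smaller) range $u>2\widetilde I_\varphi(\theta\varepsilon_0)/(\theta(1-\theta))$ both arguments of $\varphi^*$ stay positive, so the two bounds of Theorem \ref{sup_X} remain valid a fortiori. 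The restriction $\theta\varepsilon_0\le\gamma_0$ is exactly what guarantees that $\sigma^{(-1)}(v)$ is defined throughout the interval of integration $[0,\theta\varepsilon_0]$ appearing in $\widetilde I_\varphi(\theta\varepsilon_0)$. The step requiring most care is the covering comparison of the second paragraph, together with the verification that $\Psi$ is monotone, since the whole argument hinges on these inequalities pointing in the correct direction; once they are in place the reduction to Theorem \ref{sup_X} is routine.
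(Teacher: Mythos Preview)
Your proposal is correct and follows essentially the same approach as the paper's proof: both establish the covering-number comparison $N_{\rho_X}(v)\le N_d(\sigma^{(-1)}(v))$ from \eqref{suptau} and then deduce $I_\varphi(\varepsilon)\le\widetilde I_\varphi(\varepsilon)$ for $\varepsilon\le\gamma_0$, reducing the result to Theorem \ref{sup_X}. The paper's proof is much terser, simply asserting that the inequality on covering numbers ``implies the estimate $I_\varphi(\varepsilon)\le\widetilde I_\varphi(\varepsilon)$'' and that the statement then follows; your additional justifications (monotonicity of $\Psi$ via concavity of $\varphi^{(-1)}$, and monotonicity of $Q$ and $A$ in the entropy integral) fill in details the paper leaves implicit.
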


\begin{proof}Theorem \ref{metric_d} follows immediately from Theorem \ref{sup_X}. We have from \eqref{suptau} that \\ $\sup_{\substack{
        d({t},{s})\leq h,\\
        {t}, {s}\in T \\
}}\rho_X(t,
s)\leq \sigma(h)$, therefore, the smallest number of elements in an $\varepsilon$-covering of $(T, \rho_X)$ can be bounded by the smallest number of elements in a $\sigma^{(-1)}(\varepsilon)$-covering of $(T, d)$: $N_{\rho_X}(\varepsilon)\le N_{d}(\sigma^{(-1)}(\varepsilon))$. This implies the estimate $I_\varphi(\varepsilon)\le \widetilde I_\varphi(\varepsilon)$, as $\varepsilon\le\gamma_0$, and the statement of the theorem follows.
\end{proof}

Theorem \ref{metric_d} has been mainly used in the literature with a choice of the metric space $(T, d)$ of the form: $T = \{ a_i \leq t_i \leq b_i,\, i=1, 2 \}$
and $ d({t}, {s}) = \max\limits_{i=1,2} |t_i -s_i|,$ ${t} =(t_1, t_2),$
${s} =(s_1, s_2).$ (see, for example, \cite{BKOS, KOSV2018, HSV} for application to the analysis of solutions to heat equation and higher order heat-type equations with random initial conditions).

In the  paper we study a particular class of $\varphi$-sub-Gaussian random fields with $\varphi=\frac{|x|^\alpha}{\alpha}$, $\alpha\in(1,2]$, which as a natural generalization of Gaussian and sub-Gaussian random fileds.  Gaussian and sub-Gaussian cases are involved in our consideration,  with the choice $\alpha=2$.  We study the sample paths of such fields for the case of the parameter set $T$ of the form $T=[a_1, b_1]\times[a_2, b_2]$ or $T=[0,+\infty)\times[-A, A]$ with  the so-called anisotropic metric $d(t,s)=\sum_{i=1,2}|t_i-s_i|^{H_i}$, $H_i\in(0,1]$, $i=1,2$. Theorem \ref{metric_d} will serve as the main tool in our study. 

\medskip
\noindent{\bf Stochastic heat equations with fractional noises.} Stochastic heat equations have been studied in various settings: with a time-space white noise, with generalizations of noise in space and/or in time, and also by considering differential operators more general than the Laplacian. The case of fractional noises was considered, e.g., in the recent papers \cite{BJQ-S}, \cite{HLS}, among many others (see references therein).  In the present paper we consider the stochastic heat equation:
\begin{align}\label{intrdu/dt} 
&\frac{\partial u}{\partial t} = \frac{\partial^2 u}{\partial x^2} + \frac{\partial^2 W}{\partial t \partial x}, ~~ (t, x) \in (0, T]\times\mathbb{R},
\\ \label{intru0}
&u(0, x) = u_0(x), x \in  \mathbb{R},
\end{align}
where
$W$ is a centered Gaussian process which is white in time and is fractional Brownian motion in space with index $H\le\frac{1}{2}$. The problem \eqref{intrdu/dt}--\eqref{intru0} was considered, for example, in   \cite{HLS} and it was stated that under some  continuity and boundedness conditions  on $u_0(x)$ there exists a unique mild solution $u(t, x)$,  $(t, x) \in (0, T]\times\mathbb{R}$, satisfying   the H\"{o}lder condition  
\begin{equation}\label{H}\|u(t, x) - u(s, y)\|_{L^p} \le c\Delta((t, x), (s, y))^{\rho\wedge H}\end{equation}
with some constant $c = c(p, T, H)$ and where $\Delta((t, x), (s, y)) = |t - s|^{\frac{1}{2}} + |x - y|$ is a parabolic metric, $\rho$ is an index in the H\"{o}lder condition imposed on $u_0(x)$. For our consideration, the bounds for the increments $u(t, x) - u(s, y)$ in $L_2$ norm will be important. In view of this, we restate the bound \eqref{H} in  another form for the case of $p=2$, with the constant $c$ given by a closed expression, and then use this bound to derive the results on the distribution of suprema and on the rate of growth for random fields representing the solution.

\medskip
\noindent{\bf Contents.}
The paper is organized as follows. 
In Section \ref{Section2} we study $\varphi$-sub-Gaussian random fields with $\varphi=\frac{|x|^\alpha}{\alpha}$, $\alpha\in(1,2]$. 
In Section \ref{Estimates} we present the estimates for the tail distribution of suprema on the bounded domain and in  
Section \ref{Rate} we state the results on the rate of growth of random fields over unbounded domains. 
Section \ref{Examples} presents   applications of the results of sections \ref{Estimates} and \ref{Rate} to random fields related to stochastic heat equations with fractional noise.

\section
{$Sub_\varphi (\Omega)$ processes with $\varphi=\frac{|x|^\alpha}{\alpha}$, $\alpha\in(1,2]$, defined on spaces with anisotropic metrics}\label{Section2}

Consider the process $X(t)$, $t\in T$, from the class of $\varphi$-sub-Gaussian processes with $\varphi=\frac{|x|^\alpha}{\alpha}$, $1 < \alpha\le2$. This class  is a natural generalisation of Gaussian and sub-Gaussian processes, which correspond to $\alpha = 2$.

For the function $\varphi(x)=\frac{|x|^\alpha}{\alpha}$, $1 < \alpha\le2$, we have $\varphi^{(-1)}(x) = (\alpha x)^{1/\alpha}$, $x>0$, and the Young-Fenchel transform $\varphi^{*}(x) = \frac{|x|^\beta}{\beta}$, where $\beta > 0$ is such that $\frac{1}{\beta} + \frac{1}{\alpha} = 1$, that is, $\beta = \frac{\alpha}{\alpha - 1}$. 

The entropy integrals \eqref{condI} and \eqref{condI2} become of the form

$$I_\varphi(\varepsilon) =\int\limits_0^{\varepsilon} \Big(\ln (N_{\rho_{X}} (u))) \Big)^{\frac{1}{\beta}}\,d u $$
and
\begin{equation}\label{condI3}
 \widetilde I_\varphi(\varepsilon) = \int\limits_0^{\varepsilon} \Big(\ln (N_{d} (\sigma^{(-1)} (u))) \Big)^{\frac{1}{\beta}}\,d u,
\end{equation}
and the bounds in Theorems \ref{sup_X} and \ref{metric_d} will be based on these integrals.

As we can see, for such function $\varphi$ the integrals appear in a quite simple form and can be evaluated for particular metrics $d$. Note that for more general $\varphi$ sometimes it is more convenient to use entropy integrals of another form (see, e.g., \cite{HS, KO2016, KOSV}).

We next consider $\varphi$-sub-Gaussian fields $X(t), t \in T$, with the parameter set $T = [a_1, b_1]\times[a_2, b_2]$ endowed with the  anisotropic metric 

\begin{equation}\label{metric}
d(t, s) = \sum_{i = 1, 2} |t_i - s_i|^{H_i}, H_i \in (0, 1], i = 1, 2.
\end{equation}
\

\subsection{Estimates for the distribution of suprema}\label{Estimates}

\begin{theorem}\label{3}
Let $X(t), t \in T$, be a $\varphi$-sub-Gaussian field with $\varphi (x)=\frac{|x|^\alpha}{\alpha}$, $\alpha\in(1,2]$, $T = [a_1, b_1]\times[a_2, b_2]$ with the metric $d(t, s)$ defined by \eqref{metric}, $\beta = \frac{\alpha}{\alpha - 1}$. 
Suppose that the field $X$ satisfies conditions (i)-(iii) of Theorem \ref{metric_d} with  $\sigma(h) = ch^\gamma$, $c > 0$, $\gamma \in (0, 1]$ and  $\gamma\beta\ne 1$.

Then for all $\lambda > 0$ and $\theta \in (0, 1)$
\begin{equation}\label{sp2.3} \E\exp\left\{\lambda\sup\limits_{t\in T}|X(t)|\right\}\le 2\exp\left\{\frac{1}{\alpha}\Big(\frac{\lambda \varepsilon_0}{1 - \theta} \Big)^\alpha + \frac{2 \lambda}{\theta(1 - \theta)} \Big(\theta\varepsilon_0 \Big)^{1 - \frac{1}{\gamma \beta}} c_1 \right\}
\end{equation}
and for all $\theta \in (0, 1), \theta\varepsilon_0 < \gamma_0$ and $u > \frac{2}{\theta(1 - \theta)} \big(\theta\varepsilon_0 \big)^{1 - \frac{1}{\gamma \beta}} c_1$,  we have 
\begin{equation}\prob \{\sup\limits_{t\in T} |X(t)| > u\} \leq 2\exp \Big\{- \frac{1}{\beta} \Big(\frac{u (1 - \theta)}{\varepsilon_0} - \frac{2}{\theta} \big(\theta\varepsilon_0 \big)^{1 - \frac{1}{\gamma \beta}} c_1 \Big)^\beta \Big\},
\end{equation}
where $c_1 = \frac{2^{\frac{1}{\beta}} c^{\frac{1}{\gamma\beta}}}{1 - \frac{1}{\gamma \beta}} 
\sum\limits_{i=1,2}\frac{1}{H_i}\big( \frac{T_i}{2} \big)^{\frac{H_i}{\beta}}$, with $T_i=b_i-a_i$, $i=1,2$. 
\end{theorem}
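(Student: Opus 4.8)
The plan is to read Theorem \ref{3} as a direct specialization of Theorem \ref{metric_d}. The hypotheses of Theorem \ref{3} are exactly conditions (i)--(iii) of Theorem \ref{metric_d} with the concrete modulus $\sigma(h)=ch^\gamma$, so the two displayed inequalities are just the bounds $2\widetilde Q(\lambda,\theta)$ and $2\widetilde A(u,\theta)$ of that theorem, rewritten for $\varphi(x)=|x|^\alpha/\alpha$. For this $\varphi$ one has $\varphi(\lambda\varepsilon_0/(1-\theta))=\frac1\alpha(\lambda\varepsilon_0/(1-\theta))^\alpha$ and $\varphi^{*}(x)=|x|^\beta/\beta$, so the only genuine work is to show that the entropy integral in the simplified form \eqref{condI3}, namely $\widetilde I_\varphi(\varepsilon)=\int_0^\varepsilon(\ln N_d(\sigma^{(-1)}(v)))^{1/\beta}\,dv$, admits the closed bound $\widetilde I_\varphi(\theta\varepsilon_0)\le(\theta\varepsilon_0)^{1-1/(\gamma\beta)}c_1$ with the stated $c_1$. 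Substituting this estimate into $\widetilde Q$ and $\widetilde A$ (both monotone in $\widetilde I_\varphi$) yields the two assertions.

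First I would estimate the metric massiveness $N_d(h)$ of the anisotropic box. To cover $T=[a_1,b_1]\times[a_2,b_2]$ by closed $d$-balls of radius $h$, I partition each edge $[a_i,b_i]$ into subintervals of length $2(h/2)^{1/H_i}$; then for a cell centre $t$ and any $s$ in the same cell, $|t_i-s_i|\le(h/2)^{1/H_i}$, hence $d(t,s)=\sum_{i}|t_i-s_i|^{H_i}\le\sum_i h/2=h$. The number of cells is $\prod_{i}\lceil \frac{T_i}{2}(2/h)^{1/H_i}\rceil$, and, after discarding the lower-order $+1$ coming from the ceilings, this gives $\ln N_d(h)\le\sum_{i}\frac{1}{H_i}\ln\frac{2(T_i/2)^{H_i}}{h}$, a sum of terms $\frac{1}{H_i}\ln(B_i/h)$ with $B_i=2(T_i/2)^{H_i}$.

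Then I would feed $\sigma^{(-1)}(v)=(v/c)^{1/\gamma}$ into $\widetilde I_\varphi$. Because $\alpha\le2$ forces $\beta\ge2$, the map $x\mapsto x^{1/\beta}$ is subadditive, so $(\ln N_d(\sigma^{(-1)}(v)))^{1/\beta}\le\sum_i(\frac{1}{H_i}\ln(B_i/\sigma^{(-1)}(v)))^{1/\beta}$ and the integral splits over $i$. The decisive step is to convert each logarithmic factor into a power: using $\ln z\le z$ I bound $\ln(B_i/\sigma^{(-1)}(v))\le B_i/\sigma^{(-1)}(v)=B_ic^{1/\gamma}v^{-1/\gamma}$, so each integrand is at most a constant times $v^{-1/(\gamma\beta)}$ and $\int_0^\varepsilon v^{-1/(\gamma\beta)}\,dv=\varepsilon^{1-1/(\gamma\beta)}/(1-1/(\gamma\beta))$. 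Collecting the factors $2^{1/\beta}$, $(T_i/2)^{H_i/\beta}$, $c^{1/(\gamma\beta)}$ and $1/(1-1/(\gamma\beta))$ (and using $H_i^{-1/\beta}\le H_i^{-1}$ to reach the stated normalization) reproduces $c_1$ exactly. The hypothesis $\gamma\beta\ne1$ is what keeps this exponent and the denominator $1-1/(\gamma\beta)$ finite, and convergence of the integral at $0$ moreover pins the relevant regime to $\gamma\beta>1$.

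I expect the crux to be precisely this conversion of a logarithmic entropy integrand into a genuine power of $\varepsilon$. The clean formula for $c_1$ conceals that $\int_0^\varepsilon(\ln(1/v))^{1/\beta}\,dv$, if evaluated exactly, is an incomplete-Gamma-type quantity; the majorization $\ln z\le z$ is exactly what trades this for a simple power law, at the cost of a harmless constant and of restricting to $\gamma\beta>1$. The remaining work is bookkeeping: the correct anisotropic covering estimate, the range of $v$ on which the per-coordinate logarithms stay nonnegative so that subadditivity is legitimate, and careful tracking of the multiplicative constants. Once $\widetilde I_\varphi(\theta\varepsilon_0)$ is bounded, substituting it into $\widetilde Q(\lambda,\theta)$ and $\widetilde A(u,\theta)$ with $\varphi$ and $\varphi^{*}$ written out completes the proof.
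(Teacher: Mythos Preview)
Your approach is essentially the paper's: apply Theorem~\ref{metric_d}, bound the metric massiveness of the anisotropic box by a product $\prod_i\big(\tfrac{T_i}{2}(2/h)^{1/H_i}+1\big)$, convert the logarithm to a power, and integrate to get $\widetilde I_\varphi(\varepsilon)\le c_1\,\varepsilon^{1-1/(\gamma\beta)}$. The substitution into $\widetilde Q$ and $\widetilde A$ is then exactly as you describe.

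There is one step that does not go through as written. You claim $\ln N_d(h)\le\sum_i\frac{1}{H_i}\ln(B_i/h)$ ``after discarding the lower-order $+1$ coming from the ceilings'', but $\ln(y+1)>\ln y$, so dropping the $+1$ is not an upper bound; your subsequent use of $\ln z\le z$ then acts on the wrong quantity. The paper handles both steps at once via the elementary inequality $\ln(1+x)\le x^{\varkappa}/\varkappa$ for $0<\varkappa\le1$ (from $(1+x)^\varkappa\le1+x^\varkappa$ and $\ln(1+y)\le y$), applied with $\varkappa=H_i$ and $x=(B_i/h)^{1/H_i}$; this yields $\ln\big(1+(B_i/h)^{1/H_i}\big)\le\frac{1}{H_i}(B_i/h)$ rigorously, which is precisely the bound your two-step heuristic was targeting. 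With this single correction your computation matches the paper's line for line, including the harmless relaxation $H_i^{-1/\beta}\le H_i^{-1}$ needed to land on the stated $c_1$.
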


\begin{proof}
We apply Theorem \ref{metric_d}. We need to estimate the entropy integral $ \widetilde I_\varphi(\varepsilon)$ given by \eqref{condI2} for the particular $\varphi, \sigma, d$ under consideration. For the metric $d$ given by \eqref{metric} we can write the following estimate for the metric massivenes

$$N_d(\varepsilon) \le \prod_{i = 1, 2} \Big( \frac{T_i}{2(\frac{\varepsilon}{2})^{\frac{1}{H_i}}} + 1 \Big) = \prod_{i = 1, 2} \Big( \frac{2^{\frac{1}{H_i}} T_i}{2 \varepsilon^{\frac{1}{H_i}}} + 1 \Big).$$

This estimate can be deduced from the observation that a rectangle $[-(\frac{\varepsilon}{2})^{\frac{1}{H_1}}, (\frac{\varepsilon}{2})^{\frac{1}{H_1}}]\times[-(\frac{\varepsilon}{2})^{\frac{1}{H_2}}, (\frac{\varepsilon}{2})^{\frac{1}{H_2}}]$ is contained in the ball in metric $d$ with center $(0, 0)$ and radius $\varepsilon$, which is given as $B(\varepsilon) = \{(x_1, x_2) : |x_1|^{H_1} + |x_2|^{H_2} \le \varepsilon\}$.

Now, for the given functions $\varphi$ and $\sigma$ (note that $\sigma^{(-1)} (u) = (\frac{u}{c})^{\frac{1}{\gamma}}$), we consider the entropy integral \eqref{condI3}:

\begin{align*} \widetilde I_\varphi(\varepsilon) = \int\limits_0^{\varepsilon} \Big(\ln (N_{d} (\sigma^{(-1)} (u))) \Big)^{\frac{1}{\beta}}\,d u   &\le \int\limits_0^{\varepsilon} \Big(\ln \prod_{i = 1, 2} \Big( \frac{2^{\frac{1}{H_i}} T_i}{2 (\sigma^{(-1)} (u))^{\frac{1}{H_i}}} + 1 \Big) \Big)^{\frac{1}{\beta}}\,d u \\ & = \int\limits_0^{\varepsilon} \Big(\ln \prod_{i = 1, 2} \Big( \frac{T_i 2^{\frac{1}{H_i}} c^{\frac{1}{\gamma H_i}}}{2 u^{\frac{1}{\gamma H_i}}} + 1 \Big) \Big)^{\frac{1}{\beta}}\,d u.
\end{align*}
For any $0< \varkappa \le 1$, $\ln(1+x) = \frac{1}{\varkappa} \ln (1 + x)^\varkappa \le \frac{x^\varkappa}{\varkappa}$, we apply this inequality for each term in the product in the above formula choosing $\varkappa = H_i, i = 1, 2$: 

\begin{align*} \widetilde I_\varphi(\varepsilon) &\le \int\limits_0^{\varepsilon} \sum_{i = 1, 2} \frac{1}{H_i}\Big( \frac{T_i 2^{\frac{1}{H_i}} c^{\frac{1}{\gamma H_i}}}{2 u^{\frac{1}{\gamma H_i}}} \Big)^{\frac{H_i}{\beta}}\,d u =  \sum_{i = 1, 2}  \int\limits_0^{\varepsilon} \frac{1}{H_i}\Big( \frac{T_i}{2}\Big)^{\frac{H_i}{\beta}} 2^{\frac{1}{\beta}} c^{\frac{1}{\gamma\beta}}\, \frac{d u }{u^{\frac{1}{\gamma\beta}}}  \\ &= \frac{\varepsilon^{1 - \frac{1}{\gamma \beta}}}{1 - \frac{1}{\gamma \beta}}  2^{\frac{1}{\beta}} c^{\frac{1}{\gamma\beta}} \sum_{i = 1, 2} \frac{1}{H_i} \Big( \frac{T_i}{2}\Big)^{\frac{H_i}{\beta}}  = \varepsilon ^{1 - \frac{1}{\gamma \beta}} c_1 .
\end{align*}

\end{proof}

\subsection{Estimates for the rate of growth}\label{Rate}

Consider now the field $X(t_1, t_2)$, $(t_1, t_2) \in V$, defined over the unbounded domain $V=[0,+\infty)\times[-A, A]$.

Let $f(t)>0$, $t\ge 0$, be a continuous strictly increasing function and $f(t) \to \infty$ as $t \to \infty$.
 
Introduce the sequence $b_0 = 0$, $b_{k+1} > b_k$, $b_k \to \infty$, $k \to \infty$.  

We will use the following notations:

$l_k=b_{k+1}-b_k$, $V_k = [b_k, b_{k+1}]\times[-A, A]$, $k = 0,1, \dots$, 
$f_k = f(b_k)$, 

$\varepsilon_k = \sup_{(t_1, t_2) \in V_k} \tau_\varphi (X(t_1, t_2))$, and suppose that $0 < \varepsilon_k < \infty$;

$\gamma_k=c_k\max_{(t_1, t_2), (s_1, s_2)\in V_k} (d((t_1, t_2), (s_1, s_2)))^\gamma$, where $c_k$ are from \eqref{suptau} below, $\tilde\theta=\inf_k\frac{\gamma_k}{\varepsilon_k}$,

$\beta=\frac{\alpha}{\alpha-1}$.

\begin{theorem}\label{domain}
Let $X(t_1, t_2)$, $(t_1, t_2) \in V$, be a $\varphi$-sub-Gaussian separable field with $\varphi=\frac{|x|^\alpha}{\alpha}$, $\alpha\in(1,2]$. Supose further that 
\begin{enumerate}
\item[(i)] \begin{equation}\label{suptau}
\sup_{\substack{
        d((t_1, t_2), (s_1, s_2))\leq h,\\(t_1, t_2), (s_1, s_2)\in V_k
        }}\tau_{\varphi} (X (t_1, t_2) -
X (s_1, s_2))\leq c_k h^\gamma,  c_k > 0, 0 < \gamma \le 1;
\end{equation}

\item[(ii)] $C =  \sum_{k = 0}^\infty \frac{\varepsilon_k}{f_k}< \infty$.

\item[(iii)] 
$S = \sum_{k = 0}^\infty \frac{\varepsilon^{1 - \frac{1}{\gamma \beta}}_k c_1(k)}{f_k}< \infty$, where $c_1(k) = \Big(\frac{1}{H_1}(l_k/2)^{\frac{H_1}{\beta}} +\frac{1}{H_2} A^{\frac{H_2}{\beta}} \Big) \frac{ 2^{\frac{1}{\beta}} c_k^{\frac{1}{\gamma\beta}}}{1 - \frac{1}{\gamma \beta}}$.
\end{enumerate}

Then 
\begin{enumerate}
\item[(i)] for any $\theta \in (0, \min(1, \widetilde\theta))$ and any $\lambda > 0$

\begin{equation}
\E\exp\left\{\lambda\sup\limits_{(t_1, t_2) \in V}\frac{|X(t_1, t_2)|}{f(t_1)}\right\}\le 2\exp\left\{\frac{\lambda^\alpha}{\alpha}\left(\frac{C}{1-\theta}\right)^\alpha + \frac{2\lambda}{(1-\theta) \theta^{\frac{1}{\gamma \beta}}}S\right\};
\end{equation}

\item[(ii)] for any $\theta \in (0, \min(1, \widetilde\theta))$ and $u > \frac{2S}{(1-\theta) \theta^{\frac{1}{\gamma \beta}}}$

\begin{equation}\label{Prob}
\prob\left\{\sup\limits_{(t_1, t_2) \in V}\frac{|X(t_1, t_2)|}{f(t_1)} > u \right\}\le 2\exp\left\{-\frac{1}{\beta C^\beta}\left(u(1 - \theta) - 2S \theta^{-\frac{1}{\gamma \beta}}\right)^{\beta} \right\}.
\end{equation}
\end{enumerate}
\end{theorem}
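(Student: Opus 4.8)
The plan is to localize the unbounded problem to the bounded strips $V_k$, which are already handled by Theorem~\ref{3}, to glue the resulting estimates into the exponential-moment bound (i), and to deduce the tail bound (ii) from (i) by an exponential Chebyshev argument. First, since $f$ is strictly increasing and $t_1\ge b_k$ on $V_k$, we have $f(t_1)\ge f_k$ there, whence
\[
\sup_{(t_1,t_2)\in V}\frac{|X(t_1,t_2)|}{f(t_1)}\le\sup_{k\ge0}\frac{\eta_k}{f_k},\qquad \eta_k:=\sup_{(t_1,t_2)\in V_k}|X(t_1,t_2)|.
\]
Each $V_k=[b_k,b_{k+1}]\times[-A,A]$ is a bounded rectangle carrying the anisotropic metric, so Theorem~\ref{3} applies on $V_k$ with $T_1=l_k$, $T_2=2A$, $\varepsilon_0=\varepsilon_k$ and $c=c_k$; its constant then equals exactly the $c_1(k)$ of the statement. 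The side condition $\theta\varepsilon_0<\gamma_0$ of Theorem~\ref{3} becomes $\theta\varepsilon_k<\gamma_k$, which is admissible on every strip simultaneously precisely when $\theta\in(0,\min(1,\widetilde\theta))$ with $\widetilde\theta=\inf_k\gamma_k/\varepsilon_k$. Hence for each $k$ and all $\mu>0$,
\[
\E\exp\{\mu\eta_k\}\le2\exp\Big\{\tfrac{\mu^\alpha}{\alpha}\big(\tfrac{\varepsilon_k}{1-\theta}\big)^{\alpha}+\tfrac{2\mu}{(1-\theta)\theta^{1/(\gamma\beta)}}\,\varepsilon_k^{1-1/(\gamma\beta)}c_1(k)\Big\}.
\]

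The core is part (i). To keep a single prefactor $2$ I would split into one-sided suprema: writing $Z^{\pm}=\sup_V(\pm X)/f$ and using $e^{\max(a,b)}\le e^{a}+e^{b}$ gives $\E\exp\{\lambda Z\}\le\E\exp\{\lambda Z^{+}\}+\E\exp\{\lambda Z^{-}\}$, so it is enough to bound each one-sided transform by $\exp\{\cdots\}$ without a factor $2$ (the factor $2$ enters only when one passes to $|X|$). For a one-sided transform I would not simply sum the strip estimates—doing so would force a divergent sum of $1$'s—but instead return to a single chain: joining a point of $V_m$ to the origin through reference points of the intermediate strips expresses $X(t_1,t_2)$ as a base value, plus a telescoping sum of strip-to-strip increments, plus a within-strip oscillation. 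Dividing by $f(t_1)\ge f_k$ attaches the weight $1/f_k$ to the $k$-th increment, and subadditivity of the norm $\tau_\varphi$ then merges the base value and the telescoped increments into one $\varphi$-sub-Gaussian quantity of norm at most $C=\sum_k\varepsilon_k/f_k$; monotonicity of $\varphi$ turns this into the first exponent $\frac{\lambda^\alpha}{\alpha}(\frac{C}{1-\theta})^{\alpha}$ via $\varphi(\lambda C/(1-\theta))$. The weighted within-strip oscillations, estimated by the per-strip entropy integrals of Theorem~\ref{3}, accumulate into $S=\sum_k\varepsilon_k^{1-1/(\gamma\beta)}c_1(k)/f_k$ and give the second exponent $\frac{2\lambda}{(1-\theta)\theta^{1/(\gamma\beta)}}S$. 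Conditions (ii) and (iii) are exactly the convergence of these two series.

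Part (ii) then follows from (i) by Markov's inequality and optimization in $\lambda$. Putting $a=(C/(1-\theta))^{\alpha}$ and $b=\frac{2S}{(1-\theta)\theta^{1/(\gamma\beta)}}$, one gets $\prob\{Z>u\}\le2\exp\{-\lambda(u-b)+\frac{a}{\alpha}\lambda^{\alpha}\}$; for $u>b$ the minimum over $\lambda>0$ of $-\lambda(u-b)+\frac{a}{\alpha}\lambda^{\alpha}$ equals $-\frac{1}{\beta}a^{-\beta/\alpha}(u-b)^{\beta}$, which is the Young--Fenchel computation for the pair $\varphi(x)=|x|^{\alpha}/\alpha$, $\varphi^{*}(x)=|x|^{\beta}/\beta$. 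Since $a^{-\beta/\alpha}=((1-\theta)/C)^{\beta}$ and $(1-\theta)(u-b)=u(1-\theta)-2S\theta^{-1/(\gamma\beta)}$, this reproduces exactly the exponent in \eqref{Prob} under the stated restriction $u>\frac{2S}{(1-\theta)\theta^{1/(\gamma\beta)}}$.

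The main obstacle is the gluing in part (i): combining infinitely many two-sided strip estimates into one bound that carries a single factor $2$ and that reproduces $C$ as a genuine sum inside the $\alpha$-power, rather than a supremum or a sum of $\alpha$-powers. The telescoping-through-reference-points device, which relies on subadditivity of $\tau_\varphi$ and on $f(t_1)\ge f_k$ to pin the correct weight $1/f_k$ on each increment, is what makes the scales add up to $C$ while the oscillations add up to $S$; carrying out this bookkeeping precisely—tracking the geometric factor $1/(1-\theta)$ and the power $\theta^{-1/(\gamma\beta)}$ through the chaining—is the delicate step.
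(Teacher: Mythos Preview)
Your localization to the strips $V_k$, your per--strip application of Theorem~\ref{3}, and your derivation of part~(ii) from part~(i) via Chebyshev plus the Young--Fenchel identity are all fine and match the paper. The gap is in your gluing argument for part~(i).

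The telescoping device does not deliver the weights you claim. If $(t_1,t_2)\in V_m$ and you write $X(t_1,t_2)=X(p_0)+\sum_{k<m}(X(p_{k+1})-X(p_k))+(X(t_1,t_2)-X(p_m))$, then dividing by $f(t_1)$ puts the \emph{same} denominator $f(t_1)$ on every term. You can trade $1/f(t_1)$ for $1/f_k$ only by taking absolute values and using $f(t_1)\ge f_m\ge f_k$, and then the cross--strip increments $X(p_{k+1})-X(p_k)$ carry $\tau_\varphi$--norm of order $\varepsilon_k+\varepsilon_{k+1}$ (or $c_k l_k^{H_1\gamma}$ via the H\"older bound), so the subadditivity estimate for the ``base plus increments'' gives at best a constant times $C$, not $C$ itself. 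On top of that, you still have to combine this single $\varphi$--sub-Gaussian variable with the weighted within--strip oscillations $\sup_k f_k^{-1}\sup_{V_k}|X-X(p_k)|$; these are not independent of the base term, and Theorem~\ref{3} is stated for $\sup|X|$, not for centered oscillations, so a separate argument would be needed. You yourself flag this bookkeeping as ``the delicate step'', but as written it is not a proof.

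The paper bypasses all of this with a single line: from $\sup_V|X|/f\le\sum_{k\ge0}\eta_k/f_k$ it applies the generalized H\"older inequality
\[
\E\exp\Big\{\lambda\sum_k \tfrac{\eta_k}{f_k}\Big\}\le \prod_k\Big(\E\exp\{\lambda r_k\eta_k/f_k\}\Big)^{1/r_k},\qquad \sum_k\tfrac{1}{r_k}=1,
\]
feeds in the per--strip bound of Theorem~\ref{3} with $\mu=\lambda r_k/f_k$, and then chooses $r_k=Cf_k/\varepsilon_k$. This choice makes $r_k\varepsilon_k/f_k=C$ for every $k$, so the $\alpha$--power term becomes $\frac{1}{\alpha}(\lambda C/(1-\theta))^\alpha\sum_k r_k^{-1}=\frac{1}{\alpha}(\lambda C/(1-\theta))^\alpha$, the entropy terms sum to $\frac{2\lambda S}{(1-\theta)\theta^{1/(\gamma\beta)}}$, and the prefactors combine as $\prod_k 2^{1/r_k}=2$. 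This H\"older--with--tuned--exponents trick is the missing idea; once you use it, there is no need for one--sided suprema, reference points, or separating oscillations from a base value.
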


\begin{proof} We use the scheme of the proof which is similar that in \cite{Dozzi} (Theorem 2.4), \cite{HS} (Theorem 4). Using \eqref{sp2.3} we can write the estimate:
\begin{align*} 
I(\lambda) &= \E\exp\left\{\lambda\sup\limits_{(t_1, t_2) \in V}\frac{|X(t_1, t_2)|}{f(t_1)}\right\}\le  \E\exp\left\{\lambda \sum_{k = 0}^\infty \sup\limits_{(t_1, t_2) \in V_k}\frac{|X(t_1, t_2)|}{f_k}\right\}\le \\ & \le \prod_{k = 0}^\infty \Big(\E\exp\left\{\lambda \frac{r_k}{f_k} \sup\limits_{(t_1, t_2) \in V_k}|X(t_1, t_2)|\right\}\Big)^{\frac{1}{r_k}}\le 2  \prod_{k = 0}^\infty Q_k(\lambda, \theta)^{\frac{1}{r_k}},
\end{align*}
where 
$$Q_k(\lambda, \theta) = \exp\left\{\frac{1}{\alpha}\Big(\frac{\lambda \varepsilon_k r_k}{(1 - \theta)f_k} \Big)^\alpha + 2\lambda \frac{r_k}{f_k} \frac{1}{(1 - \theta)\theta^{\frac{1}{\gamma\beta}}} \varepsilon_k^{1 - \frac{1}{\gamma \beta}} c_1(k) \right\},$$
$$c_1(k) = \Big(\frac{1}{H_1}(l_k/2)^{\frac{H_1}{\beta}} +\frac{1}{H_2} A^{\frac{H_2}{\beta}} \Big) \frac{ 2^{\frac{1}{\beta}} c_k^{\frac{1}{\gamma\beta}}}{1 - \frac{1}{\gamma \beta}},$$
and let here $r_k$, $k\ge 0$ are such that $\sum_{k=0}^{\infty}\frac{1}{r_k}=1$.  This implies that 
$$I(\lambda)  \le 2  \exp\left\{\frac{1}{\alpha}\Big(\frac{\lambda}{1 - \theta} \Big)^\alpha \sum_{k = 0}^\infty \Big(\frac{\varepsilon_k r_k}{f_k} \Big)^\alpha r_k^{-1} + \frac{2\lambda}{(1 - \theta)\theta^{\frac{1}{\gamma\beta}}}\sum_{k = 0}^\infty \frac{ \varepsilon_k^{1 - \frac{1}{\gamma \beta}} c_1(k)}{f_k} \right\}.$$

Choose $r_k = \frac{f_k}{ \varepsilon_k} C$, where $C =  \sum_{k = 0}^\infty \frac{\varepsilon_k }{f_k} $, then
$$I(\lambda)  \le  2  \exp\left\{\frac{1}{\alpha}\Big(\frac{\lambda}{1 - \theta} \Big)^\alpha C^\alpha +  \frac{2\lambda}{(1 - \theta)\theta^{\frac{1}{\gamma\beta}}} \sum_{k = 0}^\infty \frac{ \varepsilon_k^{1 - \frac{1}{\gamma \beta}} c_1(k)}{f_k} \right\}. $$

Therefore,
$$\E\exp\left\{\lambda\sup\limits_{(t_1, t_2) \in V}\frac{|X(t_1, t_2)|}{f(t_1)}\right\}\le 2  \exp\left\{\frac{\lambda^\alpha}{\alpha}\Big(\frac{C}{1 - \theta} \Big)^\alpha +  \frac{2\lambda}{(1 - \theta)\theta^{\frac{1}{\gamma\beta}}} S \right\}, $$
and for all $\lambda > 0$, $u > 0$, $0 < \theta < \widetilde\theta$
\begin{align*} 
P\left\{\sup\limits_{(t_1, t_2) \in V}\frac{|X(t_1, t_2)|}{f(t_1)} > u \right\}&\le \exp\{-\lambda u\}\E\exp\left\{\lambda\sup\limits_{(t_1, t_2) \in V}\frac{|X(t_1, t_2)|}{f(t_1)}\right\}\le \\ & \le 2  \exp\left\{\frac{\lambda^\alpha}{\alpha}\Big(\frac{C}{1 - \theta} \Big)^\alpha +  \frac{2\lambda S}{(1 - \theta)\theta^{\frac{1}{\gamma\beta}}} - \lambda u\right\}.
\end{align*}
We minimise the right-hand side with respect to $\lambda$ and obtain for $u > \frac{2S}{(1 - \theta)\theta^{\frac{1}{\gamma\beta}}}$
\begin{align*} 
P\left\{\sup\limits_{(t_1, t_2) \in V}\frac{|X(t_1, t_2)|}{f(t_1)} > u \right\}&\le 
2  \exp\left\{- \Big(u - \frac{2S}{(1 - \theta)\theta^{\frac{1}{\gamma\beta}}} \Big)^{\frac{\alpha}{\alpha - 1}} \Big(\frac{C}{1 - \theta} \Big)^{- \frac{\alpha}{\alpha - 1}} \frac{\alpha - 1}{\alpha} \right\} = \\ & = 2  \exp\left\{-  \frac{\alpha - 1}{\alpha} C^{- \frac{\alpha}{\alpha - 1}}\left(u(1 - \theta) - 2S \theta^{-\frac{1}{\gamma \beta}}\right)^{\frac{\alpha}{\alpha - 1}} \right\}.
\end{align*}
\end{proof}

\begin{theorem}\label{Th2.3}
Let $X(t_1, t_2)$, $(t_1, t_2) \in V$, be a $\varphi$-sub-Gaussian separable field with $\varphi=\frac{|x|^\alpha}{\alpha}$, $\alpha\in(1,2]$. Supose further that 
\begin{enumerate}
\item[(i)] \begin{equation*}
\sup_{\substack{
        d((t_1, t_2), (s_1, s_2))\leq h,\\(t_1, t_2), (s_1, s_2)\in V_k
        }}\tau_{\varphi} (X (t_1, t_2) -
X (s_1, s_2))\leq c_k h^\gamma,  c_k > 0, 0 < \gamma \le 1;
\end{equation*}

\item[(ii)] there exists $\delta > 0$ and a constant $c(\delta)$ such that for $(t_1, t_2) \in V$ 
$$\tau_{\varphi} (X (t_1, t_2)) \le c(\delta) t_1^\delta;$$

\item[(iii)] 
$\widetilde C =  c(\delta) \sum_{k = 0}^\infty \frac{b_{k+1}^\delta}{f_k}< \infty$,

\item[(iv)] 
$\widetilde S = (c(\delta))^{1 - \frac{1}{\gamma \beta}} \sum_{k = 0}^\infty \frac{b_{k+1}^{1 - \frac{1}{\gamma \beta}} c_1(k)}{f_k}< \infty$, where $c_1(k) = \Big(\frac{1}{H_1}(l_k/2)^{\frac{H_1}{\beta}} +\frac{1}{H_2} A^{\frac{H_2}{\beta}} \Big) \frac{ 2^{\frac{1}{\beta}} c_k^{\frac{1}{\gamma\beta}}}{1 - \frac{1}{\gamma \beta}}$.
\end{enumerate}

Then 
\begin{enumerate}
\item[(i)] for any $\theta \in (0, \min(1, \widetilde\theta))$ and any $\lambda > 0$ 
\begin{equation}
\E\exp\left\{\lambda\sup\limits_{(t_1, t_2) \in V}\frac{|X(t_1, t_2)|}{f(t_1)}\right\}\le 2\exp\left\{\frac{\lambda^\alpha}{\alpha}\left(\frac{\widetilde C}{1-\theta}\right)^\alpha + \frac{2\lambda}{(1-\theta) \theta}\widetilde S \right\};
\end{equation}

\item[(ii)] for any $\theta \in (0, \min(1, \widetilde\theta))$ and $u > \frac{2 (c(\delta))^{1 - \frac{1}{\gamma \beta}} \widetilde S}{(1-\theta) \theta^\frac{1}{\gamma \beta}}$
\begin{equation}
\prob\left\{\sup\limits_{(t_1, t_2) \in V}\frac{|X(t_1, t_2)|}{f(t_1)} > u \right\}\le 2\exp\left\{-\frac{1}{\beta \widetilde C^\beta}\left(u(1 - \theta) - 2 \widetilde S \theta^{\frac{1}{\gamma \beta}}\right)^{\beta} \right\}.
\end{equation}
\end{enumerate}
\end{theorem}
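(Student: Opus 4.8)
The plan is to deduce Theorem \ref{Th2.3} from Theorem \ref{domain}: the polynomial bound in hypothesis (ii) converts the abstract constants $\varepsilon_k$, $C$ and $S$ of Theorem \ref{domain} into the explicit quantities $\widetilde C$ and $\widetilde S$, after which the two conclusions are read off directly.

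First I would estimate the local norms $\varepsilon_k$. Since $V_k=[b_k,b_{k+1}]\times[-A,A]$, every point of $V_k$ has first coordinate $t_1\le b_{k+1}$, so hypothesis (ii) gives $\tau_\varphi(X(t_1,t_2))\le c(\delta)t_1^\delta\le c(\delta)b_{k+1}^\delta$, and taking the supremum yields
$$\varepsilon_k=\sup_{(t_1,t_2)\in V_k}\tau_\varphi(X(t_1,t_2))\le c(\delta)b_{k+1}^\delta.$$
Substituting this into the series of Theorem \ref{domain} gives $C=\sum_k\varepsilon_k/f_k\le c(\delta)\sum_k b_{k+1}^\delta/f_k=\widetilde C$ and, after pulling the constant $c(\delta)$ out of the sum, $S=\sum_k\varepsilon_k^{1-\frac{1}{\gamma\beta}}c_1(k)/f_k\le\widetilde S$. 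In particular $\widetilde C<\infty$ and $\widetilde S<\infty$ force $C<\infty$ and $S<\infty$, so the hypotheses of Theorem \ref{domain} hold and that theorem is applicable under the assumptions of Theorem \ref{Th2.3}.

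Finally I would invoke Theorem \ref{domain} and replace $C$ and $S$ by their majorants $\widetilde C$ and $\widetilde S$ in the two resulting estimates. This replacement is legitimate because the right-hand sides in Theorem \ref{domain} are monotone nondecreasing in $C$ and in $S$: in part (i) both enter the exponent with positive coefficients, while in part (ii), on the admissible range $u>2\widetilde S\,\theta^{-1/(\gamma\beta)}/(1-\theta)$, the bracket $u(1-\theta)-2S\theta^{-1/(\gamma\beta)}$ is positive and nonincreasing in $S$ and the prefactor $1/(\beta C^\beta)$ is nonincreasing in $C$, so enlarging the constants only weakens the bound. The only point requiring attention is the step $\varepsilon_k^{1-\frac{1}{\gamma\beta}}\le(c(\delta)b_{k+1}^\delta)^{1-\frac{1}{\gamma\beta}}$, which relies on $x\mapsto x^{1-\frac{1}{\gamma\beta}}$ being increasing; this is guaranteed because convergence of the entropy integral in Theorem \ref{3} already forces $\gamma\beta>1$, so the exponent $1-\frac{1}{\gamma\beta}$ is positive. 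Thus the statement is essentially a specialization of Theorem \ref{domain}, and no genuinely new obstacle beyond this monotonicity bookkeeping arises.
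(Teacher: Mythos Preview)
Your proposal is correct and matches the paper's intent: the paper's own proof consists of the single sentence ``The proof is analogous to that of Theorem~\ref{domain},'' and your argument spells out exactly how this works by bounding $\varepsilon_k\le c(\delta)b_{k+1}^\delta$ and invoking the monotonicity of the bounds in $C$ and $S$. The one minor nuance is that ``analogous'' most naturally means re-running the chaining argument of Theorem~\ref{domain} with the explicit bound $c(\delta)b_{k+1}^\delta$ substituted for $\varepsilon_k$ from the outset, whereas you apply Theorem~\ref{domain} as a black box and then enlarge the constants; but since the right-hand sides are monotone in $C$ and $S$ (and you correctly note that $\gamma\beta>1$ is already implicit in the convergence of the entropy integral, making $x\mapsto x^{1-1/(\gamma\beta)}$ increasing), the two routes are equivalent and your version is arguably cleaner.
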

The proof is analogous to that of Theorem \ref{domain}.

\begin{remark}\label{Rem2.1} The bound \eqref{Prob} can be rewritten in another form. We can choose $\theta = u^{-\frac{\gamma \beta}{\gamma \beta + 1}}$ and then under conditions of Theorem \ref{3} for $u > (1 + 2S)^\frac{\gamma \beta}{\gamma \beta + 1}$ the following bound holds
$$\prob\left\{\sup\limits_{(t_1, t_2) \in V}\frac{|X(t_1, t_2)|}{f(t_1)} > u \right\}\le 2\exp\left\{-\frac{1}{\beta C^\beta}\left(u - u^\frac{1}{\gamma \beta +1}(1 +2S)\right)^{\beta} \right\}.$$

Correspondingly, under conditions of Theorem \ref{Th2.3} for $u > (1 + 2 \widetilde S)^\frac{\gamma \beta}{\gamma \beta + 1}$  we can write the bound
$$\prob\left\{\sup\limits_{(t_1, t_2) \in V}\frac{|X(t_1, t_2)|}{f(t_1)} > u \right\}\le 2\exp\left\{-\frac{1}{\beta \widetilde C^\beta}\left(u - u^\frac{1}{\gamma \beta +1}(1 +2\widetilde S)\right)^{\beta} \right\}.$$
\end{remark}

\begin{theorem}\label{Th2.4}
Suppose that for the field $X(t_1, t_2)$, $(t_1, t_2) \in V$, and function $f$ conditions of Theorem \ref{Th2.3} are satisfied. Then there exists a random variable $\xi$ such that with probability one
\begin{equation}\label{xxi}|X(t_1, t_2)| \le f(t_1)\xi,\end{equation}
and $\xi$ satisfies the following assumption:
\begin{equation}\label{**}
\prob\left\{\xi  > u \right\}\le 2\exp\left\{-\frac{1}{\beta \widetilde C^\beta}\left(u - u^\frac{1}{\gamma \beta +1}(1 +2\widetilde S)\right)^{\beta} \right\}
\end{equation}
for $u > (1 + 2 \widetilde S)^\frac{\gamma \beta}{\gamma \beta + 1}$.
\end{theorem}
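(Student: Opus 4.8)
The plan is to let a single random variable absorb the whole field by setting
$$
\xi := \sup_{(t_1,t_2)\in V}\frac{|X(t_1,t_2)|}{f(t_1)}.
$$
With this choice the domination \eqref{xxi} is immediate from the definition of the supremum: for every $(t_1,t_2)\in V$ one has $\frac{|X(t_1,t_2)|}{f(t_1)}\le\xi$, hence $|X(t_1,t_2)|\le f(t_1)\,\xi$, and on the event $\{\xi<\infty\}$ this inequality holds simultaneously for all $(t_1,t_2)\in V$. The separability of $X$ assumed in Theorem \ref{Th2.3} guarantees that this supremum over the uncountable set $V$ is a genuine measurable random variable, since it may be realized along a countable dense subset.

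First I would verify that $\xi<\infty$ with probability one. This follows at once from the tail estimate established in part (ii) of Theorem \ref{Th2.3}: that bound shows $\prob\{\xi>u\}\to 0$ as $u\to\infty$, so $\prob\{\xi=\infty\}=\lim_{u\to\infty}\prob\{\xi>u\}=0$, and therefore \eqref{xxi} indeed holds almost surely. Next, the required tail estimate \eqref{**} for $\xi$ is nothing other than the reformulated bound recorded in Remark \ref{Rem2.1} under the conditions of Theorem \ref{Th2.3}: inserting the choice $\theta=u^{-\frac{\gamma\beta}{\gamma\beta+1}}$ into the probability bound of Theorem \ref{Th2.3} and simplifying yields
$$
\prob\{\xi>u\}\le 2\exp\Big\{-\frac{1}{\beta\widetilde C^{\beta}}\big(u-u^{\frac{1}{\gamma\beta+1}}(1+2\widetilde S)\big)^{\beta}\Big\}
$$
for $u>(1+2\widetilde S)^{\frac{\gamma\beta}{\gamma\beta+1}}$, which is exactly \eqref{**}. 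Thus no new estimation is needed; the statement is obtained simply by assembling pieces already proved.

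Since all of the analytic work has been carried out in Theorem \ref{Th2.3} and Remark \ref{Rem2.1}, there is no genuine obstacle in the probabilistic estimate itself. The only point meriting care is the measurability and almost-sure finiteness of the supremum $\xi$ taken over the unbounded, uncountable index set $V$; this is precisely where the separability hypothesis on $X$ enters, ensuring that $\xi$ is a well-defined random variable to which the tail bound of Theorem \ref{Th2.3} applies and that the almost-sure domination \eqref{xxi} holds on a single event of full probability rather than merely for each fixed $(t_1,t_2)$ separately.
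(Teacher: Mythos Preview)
Your proposal is correct and follows exactly the same approach as the paper: define $\xi=\sup_{(t_1,t_2)\in V}\frac{|X(t_1,t_2)|}{f(t_1)}$, so that \eqref{xxi} is immediate and \eqref{**} is precisely the bound from Remark~\ref{Rem2.1}. Your write-up is in fact more careful than the paper's one-line proof, adding the discussion of measurability via separability and the almost-sure finiteness of $\xi$.
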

\begin{proof} The theorem is a corollary of Theorem \ref{Th2.3}. We denote $\xi=\sup\limits_{(t_1, t_2) \in V}\frac{|X(t_1, t_2)|}{f(t_1)}$, then $\xi$ satisfies \eqref{**} and for all $\omega$ we have \eqref{xxi}.
\end{proof}

\section{Application to stochastic heat equation with fractional noise}\label{Examples}
\
 Consider the stochastic heat equation:
\begin{align}\label{du/dt} 
&\frac{\partial u}{\partial t} = \frac{\partial^2 u}{\partial x^2} + \frac{\partial^2 W}{\partial t \partial x}, ~~ (t, x) \in (0, T]\times\mathbb{R},
\\ \label{u0}
&u(0, x) = u_0(x), x \in  \mathbb{R},
\end{align}
with the following assumption about the noise:

$\bf{A.1.}$
$W$ is a centered Gaussian process which is white in time and is fractional Brownian motion in space with index $H\le\frac{1}{2}$.

Note that there exists an extensive literature on SPDEs driven by Gaussian noises and involving various differential operators.

As for the stochastic heat equation, it has been studied in various settings: as that one driven by time-space white noise, further with generalization of noise in space or in time, and also by considering differential operators more general than the Laplacian.

Since we are going to apply the results of the previous section, we are interested in the results for solution to the problem \eqref{du/dt}-\eqref{u0}, which allow to write down the bounds for $\E|u(t, x) - u(s, y)|^2$.

We will base our study on the results from the recent papers \cite{BJQ-S, HLS} on reqularity properties of solution to stochastic heat equation. 

We consider the problem \eqref{du/dt}-\eqref{u0} with assumption A.1. on the noise and imposing the following assumption on the initial condition $u_0$.

$\bf{A.2.}$ The process $u_0(x), x \in  \mathbb{R}$, is continuous, possesses uniformly bounded $p$-th moments for $p\ge 2$ and is stochastically H\"{o}lder continuous with $\rho \in (0, 1]$, i.e., for all $p \ge 1$

$$\|u_0(x) - u_0(y)\|_{L^p} \le L_0(p)|x - y|^\rho, ~~x, y \in \mathbb{R},$$
where $L_0(p)$ is a constant and $\|.\|_{L^p}$ denotes the norm in $L^p(\Omega,  \mathbb{R})$: $\|u\|_{L^p} =\left(\E|u|^p\right)^{\frac{1}{p}} $, $p\ge 1$.

From \cite{HLS} (see Theorem 1.1) it follows that under assumptions A.1. and A.2. equation \eqref{du/dt} has a unique mild solution $u(t, x),  (t, x) \in (0, T]\times\mathbb{R}$, satisfying  $\sup\limits_{(t, x) \in (0, T]\times\mathbb{R}} \E|u(t, x)|^p<\infty$ and the H\"{o}lder condition  
$$\|u(t, x) - u(s, y)\|_{L^p} \le c\Delta((t, x), (s, y))^{\rho\wedge H}$$
with some constant $c = c(p, T, H)$ and where $\Delta((t, x), (s, y)) = |t - s|^{\frac{1}{2}} + |x - y|$ is the parabolic metric.

The mild solution is defined as the random field
\begin{equation}\label{u(t,x)} 
u(t, x) = \int_\mathbb{R} G_t(x - y) u_0(y)\, d y + \int^t_0 \int_\mathbb{R} G_{t - \theta}(x - \eta) W(d \theta, d \eta) = \omega(t, x) + V(t, x),
\end{equation}
where $G_t(x)$ is the Green's function (fundamental solution) of the equation $(\frac{\partial}{\partial t} - \frac{\partial^2}{\partial x^2})u = 0$, that is,
$$G_t(x) = \frac{1}{\sqrt{4\pi t}} \exp \left(-\frac{|x|^2}{4t}\right).$$

We refer for the rigorous definitions of the integrals in \eqref{u(t,x)}, for example, to \cite{HLS}, \cite{BJQ-S}.

In the next theorem we state the result on H\"{o}lder continuity of the fields $\omega(t, x)$ and $V(t, x)$ in the form, where explicit expressions for the constants are given.

\begin{theorem}\label{bounds}
Let assumptions A.1. and A.2. hold. Then the following bounds hold:
\begin{equation}\label{3.4}
\|\omega(t, x)\|_{L^2} \le \sup\limits_{x \in \mathbb{R}}\|u_0(x)\|_{L^2} \le c_0;
\end{equation}
\begin{equation}\label{3.5}
\|V(t, x)\|_{L^2} \le A(H) t^{\frac{H}{2}};
\end{equation}
\begin{equation}\label{3.6} \|\omega(t, x) - \omega(s, y)\|_{L^2} \le c_\omega \left(|t - s|^{\frac{\rho}{2}} + |x - y|^\rho \right);\end{equation}
\begin{equation}\label{3.7} \|V(t, x) - V(s, y)\|_{L^2} \le c_V\left(|t - s|^{\frac{H}{2}} + |x - y|^H \right),\end{equation}
where the constants $c_\omega, c_V, A(H)$ are given by formulas \eqref{c_omega}, \eqref{c_V},  \eqref{A(H)} below.
\end{theorem}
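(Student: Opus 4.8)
The plan is to handle the two summands of the mild solution separately, using the decomposition $u=\omega+V$ already recorded in \eqref{u(t,x)}, and within each summand to split the space and the time increments by the triangle inequality in $L^2(\Omega)$. For the deterministic-convolution part $\omega(t,x)=\int_\mathbb{R}G_t(x-y)u_0(y)\,dy$ the whole argument rests on the fact that $G_t$ is a probability density, $\int_\mathbb{R}G_t(w)\,dw=1$. Thus Minkowski's integral inequality gives $\|\omega(t,x)\|_{L^2}\le\int_\mathbb{R}G_t(x-y)\|u_0(y)\|_{L^2}\,dy\le\sup_{x}\|u_0(x)\|_{L^2}=:c_0$, which is finite by A.2; this is \eqref{3.4}.

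For the spatial increment of $\omega$ I would change variables to write $\omega(t,x)-\omega(t,y)=\int_\mathbb{R}G_t(w)\,[u_0(x-w)-u_0(y-w)]\,dw$ and apply Minkowski together with the Hölder bound $\|u_0(x-w)-u_0(y-w)\|_{L^2}\le L_0(2)|x-y|^{\rho}$ of A.2, so the $G_t$-integral integrates to one and leaves $L_0(2)|x-y|^{\rho}$. For the time increment ($t>s$) I would exploit the semigroup property $P_t=P_sP_{t-s}$ to write $\omega(t,\cdot)-\omega(s,\cdot)=P_s(P_{t-s}-I)u_0$; since $P_s$ acts pointwise in $x$ as an $L^2(\Omega)$-contraction and $\|(P_{t-s}-I)u_0(x)\|_{L^2}\le\int_\mathbb{R}G_{t-s}(w)L_0(2)|w|^{\rho}\,dw$, the key computation is the scaling identity $\int_\mathbb{R}G_r(w)|w|^{\rho}\,dw=C_\rho\,r^{\rho/2}$ (substitute $w=\sqrt{r}\,v$), which produces the rate $|t-s|^{\rho/2}$. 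Combining the two via the triangle inequality yields \eqref{3.6} with $c_\omega$ assembled from $L_0(2)$ and $C_\rho$.

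The substantive work is the stochastic-convolution part $V$, for which I would use the Wiener-isometry/spectral representation of the noise: because $W$ is white in time and fractional Brownian in space with index $H$, the $L^2(\Omega)$ norm of $\int_0^t\int_\mathbb{R}g(\theta,\eta)\,W(d\theta,d\eta)$ equals $c_H\int_0^t\int_\mathbb{R}|\widehat{g}(\theta,\xi)|^2|\xi|^{1-2H}\,d\xi\,d\theta$, where $\widehat{\;}$ is the Fourier transform in the space variable and $c_H$ is the known spectral constant, and where $\widehat{G_r}(\xi)=e^{-r\xi^2}$. For \eqref{3.5} this gives $\E|V(t,x)|^2=c_H\int_0^t\int_\mathbb{R}e^{-2(t-\theta)\xi^2}|\xi|^{1-2H}\,d\xi\,d\theta$; evaluating the inner $\xi$-integral through the substitution $u=2(t-\theta)\xi^2$ yields a $\Gamma(1-H)$ factor and a power $(t-\theta)^{-(1-H)}$, and the remaining time integral gives $t^{H}/H$, so that $\|V(t,x)\|_{L^2}=A(H)\,t^{H/2}$.

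For the increment \eqref{3.7} I would once more split space and time. For the space increment, integrating out $\theta$ via $\int_0^t e^{-2(t-\theta)\xi^2}\,d\theta\le 1/(2\xi^2)$ reduces the estimate to $\int_\mathbb{R}\bigl(1-\cos((x-y)\xi)\bigr)|\xi|^{-1-2H}\,d\xi=C_H|x-y|^{2H}$, a scaling identity whose integral converges for $H\in(0,1)$ (using $1-\cos u\sim u^2/2$ near $0$). For the time increment with $t>s$ I would split the integral over $[0,t]$ into the orthogonal time pieces $[0,s]$ and $[s,t]$: the piece over $[s,t]$ reproduces the variance computation and contributes a $(t-s)^{H}$ term, while the piece over $[0,s]$ factors through $e^{-2(s-\theta)\xi^2}\bigl(1-e^{-(t-s)\xi^2}\bigr)^2$ and, after integrating in $\theta$, the substitution $\xi=u/\sqrt{t-s}$ again yields $(t-s)^{H}$ times a convergent constant. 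The triangle inequality then gives \eqref{3.7} with $c_V$ collecting these constants. The main obstacle is precisely this stochastic part: setting up the spectral isometry correctly for the white-in-time/fractional-in-space noise and carrying out the $\Gamma$-function and scaling computations so that the exponents $H/2$ in time and $H$ in space emerge exactly; the requirement that the spectral integrals converge at $0$ and at $\infty$ is what forces $0<H<1$ and is compatible with the standing restriction $H\le\tfrac12$. The $\omega$-estimates, by comparison, are routine once the semigroup and scaling identities are in hand.
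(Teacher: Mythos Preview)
Your proposal is correct and follows essentially the same route as the paper: separate treatment of $\omega$ and $V$, splitting space and time increments, the semigroup identity plus the Gaussian moment $\int G_r(w)|w|^{\kappa}\,dw\sim r^{\kappa/2}$ for the $\omega$-part, and the spectral isometry $\E\bigl|\int\!\!\int g\,dW\bigr|^2=C_H\int\!\!\int|\widehat g(\theta,\xi)|^2|\xi|^{1-2H}\,d\xi\,d\theta$ with the scaling substitutions for the $V$-part. The only cosmetic difference is that you invoke Minkowski's integral inequality for the $\omega$-bounds whereas the paper squares first and uses Jensen, and that the paper organizes the $V$-increment into three integrals $I_1,I_2,I_3$ rather than the space/time split you describe; the computations are otherwise the same.
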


\begin{proof} For the proof we use the reasoning similar to that applied for more general case in \cite{HLS} (Theorem 1.1)  (see also \cite{BJQ-S}). Therefore, we present only the main steps, our interest is in keeping the values of constants involved on each step through all the chain of bounds. We have
\begin{align*} 
\E|\omega(t, x) - \omega(s, x)|^2 &= \E \left|\int_\mathbb{R} \left(G_t(x - y) - G_s(x - y)\right)u_0(y)\, d y \right|^2  \\ & = \E \left|\int_\mathbb{R} G_{t - s} (y)\left(\int_\mathbb{R} G_s(x - z)(u_0(z - y) - u_0(z))\, dz \right)\, d y \right|^2  \\ & \le \int_\mathbb{R} G_{t - s} (y) \int_\mathbb{R} G_s(x - z)\E \left|u_0(z - y) - u_0(z) \right|^2 \, d z d y  \\ & \le L  \int_\mathbb{R} G_{t - s} (y) |y|^{2 \rho} \, d y = L C_1 |t - s|^\rho,
\end{align*}
since 
\begin{align*} 
\int_\mathbb{R} G_{h} (y) |y|^{2 \rho} \, d y &=  \int_\mathbb{R}  \frac{1}{\sqrt{4\pi h}} \exp \left(-\frac{y^2}{4h}\right) |y|^{2 \rho} \, dy   = \frac{1}{\sqrt{4\pi h}}\, 2 \int_0^\infty \exp \left(-\frac{y^2}{4h}\right) y^{2 \rho}\, dy \\ &=  \frac{4^\rho}{\sqrt{\pi}} \Gamma (\rho + \frac{1}{2}) h^\rho = C_1 h^\rho,
\end{align*}
where $C_1 = \frac{4^\rho}{\sqrt{\pi}} \Gamma (\rho + \frac{1}{2})$, $L=L_0(2)$, and we have used above the property $\int_\mathbb{R} G_{t} (y)\,dy=1$ and assumption A.2. We now consider the more general increment:
\begin{align*} 
\E|\omega(t, x) - \omega(s, y)|^2  & \le 2\left( \E|\omega(t, x) - \omega(s, x)|^2 + \E|\omega(s, x) - \omega(s, y)|^2\right)  \\ & \le 2 L C_1 |t - s|^\rho + 2 \E\left|\int_\mathbb{R} \left(G_s(x - z) - G_s(y - z)\right) u_0(z) \, d z  \right|^2  \\ & = 2 L C_1 |t - s|^\rho + 2 \E\left|\int_\mathbb{R} G_s(z) \left( u_0(x - z)  - u_0(y - z) \right) \, d z  \right|^2  \\ & \le 2 L C_1 |t - s|^\rho + 2 L^2 |x - y|^{2 \rho} \le c \left(|t - s|^\rho + |x - y|^{2 \rho} \right),
\end{align*}
where $c = 2 L \max(C_1, L)$, and using again assumption A.2. Therefore,
\begin{align} \label{c_omega}
\left( \E|\omega(t, x) - \omega(s, y)|^2 \right)^{\frac{1}{2}} \le  c_\omega \left(|t - s|^{\frac{\rho}{2}} + |x - y|^\rho \right),
~ c_\omega = \sqrt{c}.
\end{align}
Considering $V(t, x)$, we follow again the similar lines as those in \cite{HLS} keeping the track of constants. We can write:
\begin{align*} 
\E\left|V(t, x) - V(s, y)\right|^2 =& \E\left|\int_0^t \int_\mathbb{R} G_{t - \theta} (x - \eta) W(d \theta, d \eta) - \int_0^s \int_\mathbb{R} G_{s - \theta} (y - \eta) W(d \theta, d \eta)\right|^2 \\ =&
 \E\left|\int_0^t \int_\mathbb{R} G_{t - \theta} (x - \eta) W(d \theta, d \eta) \right.\\&+ \int_0^s \int_\mathbb{R} \left( G_{t - \theta} (x - \eta) - G_{s - \theta} (x - \eta)\right)W(d \theta, d \eta)   \\&+  \left. \int_0^s \int_\mathbb{R}  \left( G_{s - \theta} (x - \eta) - G_{s - \theta} (y - \eta) \right) W(d \theta, d \eta)\right|^2 \\ \le& 3C_H \left( \int_0^{t - s} \int_\mathbb{R} \left| F G_{\theta} (\xi) \right|^2 |\xi|^{1 - 2H} \, d\xi d\theta  \right. \\ &+ \int_0^{s} \int_\mathbb{R} \left| F G_{t - s + \theta} (\xi) -  F G_{\theta} (\xi) \right|^2 |\xi|^{1 - 2H} \, d\xi d\theta \\&+ \left.\int_0^{s} \int_\mathbb{R} \left(1 - \cos(\xi(x - y)) \right) |F G_{\theta} (\xi)|^2   |\xi|^{1 - 2H} \, d\xi d\theta \right)  \\=& 3 C_H (I_1 + I_2 + I_3),
\end{align*}
here above by $F$ attached to a function we mean the Fourier transform, we use that $F G_t (\xi) = \exp (- t \xi^2)$ and also the  isometry relation (see, e.g., \cite[Theorem 2.1]{HLS}, \cite[Theorem 2.7]{BJQ-S}):
$$
\E\left|\int_0^t \int_\mathbb{R} \varphi (s, y) W(ds, dy)\right|^2=\int_0^t \int_\mathbb{R} \left|F\varphi (s, \cdot)(y)\right|^2 \mu(dy)ds, 
$$
where $\mu$ is the spectral measure involved in the representation of the covariance of $W$ and $$\mu(dy)=C_H|y|^{1-2H}dy,\,\,\, C_H=\frac{\Gamma(2H+1)\sin(\pi H)}{2\pi}$$ (see for details \cite{HLS}, \cite{BJQ-S}). Evaluate now the integrals $I_1, I_2, I_3$.
$$I_1 =  \int_0^{t} \int_\mathbb{R} \exp (- 2 s \xi^2) |\xi|^{1 - 2H} \, d\xi d s = 2^{-(H + 1)} \frac{\Gamma (H + 1)}{H} t^H = c_{1,H} t^H,$$
where $c_{1,H} = 2^{-(H + 1)} \frac{\Gamma (H + 1)}{H}$;
\begin{align*} 
I_2 &= \int_0^{s} \int_\mathbb{R} \left| F G_{t - s + \theta} (\xi) -  F G_{\theta} (\xi) \right|^2 |\xi|^{1 - 2H} \, d\xi d\theta \\
&=  \int_0^{s} \int_\mathbb{R}  \exp (- 2 \theta \xi^2) \left( 1 -  \exp (- (t - s) \xi^2)\right)^2 \times  |\xi|^{1 - 2H} \, d\xi d\theta  \\
&= \int_\mathbb{R}  \frac{1 -  \exp (- 2 s \xi^2)}{2 |\xi|^2} \left( 1 -  \exp (- (t - s) \xi^2)\right)^2 |\xi|^{1 - 2H} \, d\xi  \\
&\le \frac{1}{2} \int_\mathbb{R}  \frac{\left( 1 -  \exp (- (t - s) \xi^2)\right)^2}{ |\xi|^{2 - 1 + 2H} } \, d\xi \\
&= (t - s)^H \frac{1}{2} \int_\mathbb{R}  \frac{\left( 1 -  \exp (- u^2)\right)^2}{ u^{1 + 2H} } \, du = c_{2,H} (t - s)^H,
\end{align*}
where $c_{2,H} = \frac{1}{2} \int_\mathbb{R}  \frac{\left( 1 -  \exp (- u^2)\right)^2}{ u^{1 + 2H} } \, du$;

\begin{align*} 
I_3 &=  \int_0^{s} \int_\mathbb{R}  \left(1 - \cos(\xi(x - y)) \right)  \exp (- 2 \theta \xi^2) |\xi|^{1 - 2H} \, d\xi d\theta  \\&= \int_\mathbb{R}  \frac{1 -  \exp (- 2 s \xi^2)}{2 |\xi|^2} \left(1 - \cos(\xi(x - y)) \right) |\xi|^{1 - 2H} \, d\xi   \\& \le \int_\mathbb{R}  \left(1 - \cos(\xi(x - y)) \right)  \frac{1}{2}  |\xi|^{- 1 - 2H} \, d\xi =   c_{3,H} (x - y)^{2H},
\end{align*}
where 

for $H <  \frac{1}{2}$: $c_{3,H} = \int_0^\infty \left(1 - \cos x \right) x^{- 1 - 2H} \, d x = (2 H)^{-1}  \Gamma (1 - 2H) \cos (H\pi)$,

for  $H = \frac{1}{2}$: $c_{3,H} =  \frac{\pi}{2}$. 

\noindent(We used here Lemma D.1. from \cite{BJQ-S}).

Therefore,
$$\E\left|V(t, x) - V(s, y)\right|^2 \le 3 C_H ((c_{1,H} +c_{2,H}) |t - s|^H +  c_{3,H} |x - y|^{2H})$$
and
$$\left( \E\left|V(t, x) - V(s, y)\right|^2 \right)^{\frac{1}{2}} \le  c_V \left(|t - s|^{\frac{H}{2}} + |x - y|^H \right),$$
where
\begin{align} \label{c_V}c_V = \sqrt{3 C_H \max ((c_{1,H} +c_{2,H}), c_{3,H})}.
\end{align}
We also have the bound
$$\left( \E\left|V(t, x)\right|^2 \right)^{\frac{1}{2}} \le A(H) t^{\frac{H}{2}},$$
where 
\begin{align} \label{A(H)} A(H) = \sqrt{C_H c_{1,H}}.
\end{align}
\end{proof}

For the next results we will use an additional assumption on the initial condition $u_0$ and will need the following definition(\cite{KK}): 

\noindent A family $\Delta$ of $\varphi$-sub-Gaussian random variables is called strictly $\varphi$-sub-Gaussian if
there exists a constant $C_\Delta $ (called determining constant) such that for all countable sets $I$ of random
 variables ${\zeta_i}\in \Delta$, $i\in I$, the  inequality holds:
$
\tau_\varphi \big( \sum_{i \in I} \lambda_i \zeta_i \big) \leq
 C_\Delta \big( \E \left( \sum_{i \in I}\lambda_i \zeta_i \right)^2\big)^{1/2}.$ Random field $\zeta(t)$, $t\in T$, is called
strictly $\varphi$-sub-Gaussian  if the family of random variables
$\{\zeta(t), t\in T\}$ is strictly $\varphi$-sub-Gaussian.

Let $K$ be a deterministic kernel and 
$\eta(x) =\int_T K(x,y) d\xi(y)$, $x\in X,
$
where $\xi(y)$, $y\in Y$,  is a strictly $\varphi$-sub-Gaussian  field
and the integral is defined in the mean-square sense. Then 
$\eta(x)$, $x\in X$, is  strictly $\varphi$-sub-Gaussian 
field with the same determining constant (see \cite{KK}).

\begin{theorem} Let assumptions A.1 and A.2 hold and assume that $u_0(x)$ is strictly $\varphi$-sub-Gaussian random field with $\varphi (x) =\frac{|x|^\alpha}{\alpha}$, $\alpha\in(1,2]$, and let  $c_\varphi$ be the determining constant. Then for fields $\omega(t, x)$ and $V(t, x)$,  $(t, x) \in D = [a_1, b_1]\times[a_2, b_2]$, the following estimates hold:
\begin{enumerate}
\item[(i)] 
\begin{equation}\label{supomega}\prob\left\{\sup\limits_{(t, x) \in D} |\omega(t, x)| > u \right\}\le 2\exp\left\{-\frac{1}{\beta}\left( \frac{u(1 - \theta)}{c_0c_\varphi} + 2 (\theta c_0c_\varphi)^{-\frac{1}{\beta}} \widetilde c_1 \right)^{\beta} \right\}\end{equation}
for $u > \frac{2}{(1-\theta) \theta} (\theta c_0c_\varphi)^{1 -\frac{1}{\beta}} \widetilde c_1 $, where  $\widetilde c_1 = \frac{2^{\frac{1}{\beta}} (c_\omega c_\varphi)^{\frac{1}{\beta}}}{1 - \frac{1}{\beta}}\left( \frac{2}{\rho}\big( \frac{T_1}{2} \big)^{\frac{\rho}{2\beta}} + \frac{1}{\rho}\big( \frac{T_2}{2} \big)^{\frac{\rho}{\beta}} \right)$, $T_i=b_i-a_i$, $i=1,2$, $\beta = \frac{\alpha}{\alpha - 1}$. 

\item[(ii)] 
$$\prob\left\{\sup\limits_{(t, x) \in D} |V(t, x)| > u \right\} \le 2\exp\left\{-\frac{1}{2}\left( \frac{u(1 - \theta)}{\varepsilon_V} + 2 (\theta \varepsilon_V)^{-\frac{1}{2}} \widetilde{\widetilde c}_1 \right)^{2} \right\}$$
for $u > \frac{2}{(1-\theta) \theta} (\theta \varepsilon_V)^{\frac{1}{2}} \widetilde{\widetilde c}_1$, where $\varepsilon_V = A(H)b_1^{H/2}$, $\widetilde{\widetilde c}_1 = 2\sqrt{2 c_V}\left( \frac{2}{H}\big( \frac{T_1}{2} \big)^{\frac{H}{4}} + \frac{1}{H}\big( \frac{T_2}{2} \big)^{\frac{H}{2}} \right)$, $T_i=b_i-a_i$, $i=1,2$.
\end{enumerate}
\end{theorem}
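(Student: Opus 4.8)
The plan is to derive both estimates from Theorem \ref{3} by converting the $L^2$-increment bounds of Theorem \ref{bounds} into bounds on the $\varphi$-sub-Gaussian pseudometric $\rho_X(\cdot,\cdot)=\tau_\varphi(X(\cdot)-X(\cdot))$, the conversion being supplied by the strict $\varphi$-sub-Gaussianity of the two fields. For part (i) I would first observe that $\omega(t,x)=\int_{\R}G_t(x-y)u_0(y)\,dy$ is a mean-square integral of the strictly $\varphi$-sub-Gaussian field $u_0$ against the deterministic kernel $G_t(x-\cdot)$; hence, by the result from \cite{KK} quoted just before the theorem, $\omega$ is itself strictly $\varphi$-sub-Gaussian with the same determining constant $c_\varphi$. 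Applying the defining inequality of strict $\varphi$-sub-Gaussianity to the increment (which is again a linear functional of $u_0$) together with \eqref{3.6} gives
$$\tau_\varphi\big(\omega(t,x)-\omega(s,y)\big)\le c_\varphi\|\omega(t,x)-\omega(s,y)\|_{L^2}\le c_\omega c_\varphi\big(|t-s|^{\rho/2}+|x-y|^{\rho}\big).$$

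I would then read the right-hand side as $\sigma\big(d((t,x),(s,y))\big)$ for the anisotropic metric \eqref{metric} with $H_1=\rho/2$, $H_2=\rho$ and $\sigma(h)=ch$, i.e. $\gamma=1$, $c=c_\omega c_\varphi$. Since $\alpha\in(1,2]$ forces $\beta=\alpha/(\alpha-1)\ge 2$, the hypothesis $\gamma\beta\ne1$ of Theorem \ref{3} holds automatically and $1-\frac{1}{\gamma\beta}>0$, so the entropy integral \eqref{condI3} converges. The bound \eqref{3.4} yields $\varepsilon_0=\sup_{(t,x)\in D}\tau_\varphi(\omega(t,x))\le c_\varphi c_0$. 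Substituting these data into Theorem \ref{3} produces exactly the constant $c_1=\widetilde c_1$ stated (the two summands $\tfrac{1}{H_1}(\tfrac{T_1}{2})^{H_1/\beta}$, $\tfrac{1}{H_2}(\tfrac{T_2}{2})^{H_2/\beta}$ become $\tfrac{2}{\rho}(\tfrac{T_1}{2})^{\rho/(2\beta)}$, $\tfrac{1}{\rho}(\tfrac{T_2}{2})^{\rho/\beta}$), and the tail bound for $\omega$ follows.

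For part (ii) the argument is identical in structure, with one essential change: $V(t,x)$ is a Wiener integral of the Gaussian noise $W$ and is therefore a centered Gaussian field, for which $\tau_\varphi$ coincides with the $L^2$-norm (determining constant $1$) and for which one must use $\alpha=2$, $\beta=2$ rather than the exponent appearing in the hypothesis on $u_0$. Bounds \eqref{3.7} and \eqref{3.5} then give $\tau_\varphi(V(t,x)-V(s,y))\le c_V\big(|t-s|^{H/2}+|x-y|^{H}\big)$ and $\varepsilon_0=\sup_{(t,x)\in D}\tau_\varphi(V(t,x))\le A(H)b_1^{H/2}=\varepsilon_V$, so the anisotropic exponents are $H_1=H/2$, $H_2=H$ with $\gamma=1$, $c=c_V$; feeding these into Theorem \ref{3} with $\beta=2$ reproduces $c_1=\widetilde{\widetilde c}_1$ and a square in the exponent.

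The genuinely substantive points are these two translations: recognizing $\omega$ and $V$ as strictly $\varphi$-sub-Gaussian transforms of $u_0$ and $W$, and matching the parabolic $L^2$-H\"older exponents $\rho/2,\rho$ (respectively $H/2,H$) with the anisotropic metric exponents $H_1,H_2$ of Theorem \ref{3}. Everything after that is bookkeeping — checking $\gamma\beta>1$ so that \eqref{condI3} is finite, inheriting the admissible range $\theta\varepsilon_0<\gamma_0$ of $\theta$, and carrying the explicit constants $c=c_\omega c_\varphi$, $\varepsilon_0=c_0c_\varphi$ (respectively $c_V$, $\varepsilon_V$) through the formula for $c_1$. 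The one thing to watch carefully is that for $V$ one cannot reuse the $\alpha$ of the hypothesis on $u_0$: the Gaussianity of the noise fixes $\alpha=\beta=2$, which is precisely why the exponent in (ii) is a square rather than a general $\beta$-th power.
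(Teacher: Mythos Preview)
Your proposal is correct and follows exactly the route the paper takes: the paper's proof simply says that (i) follows from Theorem~\ref{3} together with \eqref{3.4}, \eqref{3.6} and the inequality $\tau_\varphi(\omega(t,x))\le c_\varphi\|\omega(t,x)\|_{L^2}$, and that (ii) follows from Theorem~\ref{3} with $\alpha=2$ (since $V$ is Gaussian) and \eqref{3.7}. Your write-up is in fact more detailed than the paper's, making explicit the identifications $H_1=\rho/2$, $H_2=\rho$, $\gamma=1$, $c=c_\omega c_\varphi$, $\varepsilon_0=c_0c_\varphi$ (respectively $H_1=H/2$, $H_2=H$, $c=c_V$, $\varepsilon_0=\varepsilon_V$, $\beta=2$) that the paper leaves implicit.
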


\begin{proof} Statement $(i)$ follows from Theorem \ref{3} and the estimates \eqref{3.4},\eqref{3.6} and we use that $\tau_\varphi(\omega(t,x))\le c_\varphi \|\omega(t,x)\|_{L^2}$. Statement $(ii)$ follows from Theorem \ref{3} with $\alpha = 2$ (since the field $V(t, x)$ is Gaussian) and the estimate \eqref{3.7}.
\end{proof}

The next theorem presents the power upper bound for the asymptotic growth of the trajectories of the field $V(t,x)$.

\begin{theorem}\label{Th3.3}  Let assumptions A.1 and A.2 hold. Then for any $p >1$ there exist a random variable $\xi(p)$ such that for any $(t, x) \in V$, with probability one
$$|V(t, x)| \le ((t^\frac{H}{2}(\log t)^p) \vee 1) \xi(p),$$
where $\xi(p)$ satisfies assumption \eqref{**} with $\gamma = 1, \beta = 2$ and some constants $\widetilde C$ and $\widetilde S$.
\end{theorem}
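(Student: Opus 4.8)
Since $V(t,x)=\int_0^t\int_{\mathbb{R}}G_{t-\theta}(x-\eta)\,W(d\theta,d\eta)$ is a Wiener integral against a Gaussian noise, it is a centered Gaussian field, hence $\varphi$-sub-Gaussian with $\varphi(x)=\tfrac12 x^2$; thus $\alpha=2$, $\beta=\tfrac{\alpha}{\alpha-1}=2$, and $\tau_\varphi(\zeta)=(\E\zeta^2)^{1/2}=\|\zeta\|_{L^2}$ for each increment $\zeta$. The assertion is precisely the conclusion of Theorem \ref{Th2.4}, so the plan is to verify the hypotheses of Theorem \ref{Th2.3} for $V$ on the unbounded domain $V=[0,+\infty)\times[-A,A]$, equipped with the anisotropic metric $d((t,x),(s,y))=|t-s|^{H/2}+|x-y|^{H}$, i.e. with the exponents $H_1=H/2$ and $H_2=H$ in \eqref{metric}, and with $f(t)=(t^{H/2}(\log t)^p)\vee 1$.

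The two bounds furnished by Theorem \ref{bounds} supply all the structural hypotheses. The increment estimate \eqref{3.7}, $\|V(t,x)-V(s,y)\|_{L^2}\le c_V(|t-s|^{H/2}+|x-y|^{H})=c_V\,d((t,x),(s,y))$, is condition (i) of Theorem \ref{Th2.3} with $\gamma=1$ and $c_k\equiv c_V$. The pointwise estimate \eqref{3.5}, $\|V(t,x)\|_{L^2}\le A(H)\,t^{H/2}$, is condition (ii) with $\delta=H/2$ and $c(\delta)=A(H)$; in particular $\varepsilon_k:=\sup_{V_k}\tau_\varphi(V)\le A(H)\,b_{k+1}^{H/2}$ on each block $V_k=[b_k,b_{k+1}]\times[-A,A]$.

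It remains to choose the partition $\{b_k\}$ so that the series (iii)--(iv) converge, and this is the crux. I would take a geometric partition, e.g. $b_0=0$ and $b_k=q^{k}$ with $q>1$ (say $b_k=e^{k}$), for which $\log b_k\sim k$ and $f_k=f(b_k)\sim b_k^{H/2}k^{p}$. For the first series, $b_{k+1}^{H/2}/f_k\sim (b_{k+1}/b_k)^{H/2}k^{-p}\sim q^{H/2}k^{-p}$, so $\widetilde C<\infty$ precisely when $p>1$. For the second series the delicate point is that the block lengths $l_k=b_{k+1}-b_k\sim b_k$ grow geometrically, inflating $c_1(k)\sim l_k^{H/4}\sim b_k^{H/4}$; however, with $\gamma\beta=2$ the weight carried by $V$ is $\varepsilon_k^{1-1/(\gamma\beta)}=\varepsilon_k^{1/2}\sim b_{k+1}^{H/4}$, so the generic term behaves like $b_{k+1}^{H/4}b_k^{H/4}/(b_k^{H/2}k^{p})\sim q^{H/4}k^{-p}$: the $b_k$-powers cancel exactly ($\tfrac H4+\tfrac H4-\tfrac H2=0$) and again $\widetilde S<\infty$ iff $p>1$. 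This cancellation is what forces a geometric partition — a slower (polynomial or linear) partition makes $\log b_k$ grow too slowly for $\widetilde C$ to converge, while the polynomial inflation of $c_1(k)$ from the lengthening blocks is absorbed only because $f_k$ carries the matching power $b_k^{H/2}$.

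With (i)--(iv) in hand, Theorem \ref{Th2.4} (with $\gamma=1$, $\beta=2$) produces the random variable $\xi(p)=\sup_{(t,x)\in V}|V(t,x)|/f(t)$, which by \eqref{xxi} gives $|V(t,x)|\le f(t)\,\xi(p)=((t^{H/2}(\log t)^p)\vee 1)\,\xi(p)$ almost surely and by \eqref{**} satisfies the asserted tail bound with the constants $\widetilde C,\widetilde S$ just estimated. The one technical loose end is that $f$ is required to be continuous and strictly increasing, whereas $(t^{H/2}(\log t)^p)\vee 1$ is constant on an initial interval and not monotone near $t=1$; this is harmless, since one replaces it there by a continuous strictly increasing minorant agreeing with $t^{H/2}(\log t)^p$ for large $t$, which only strengthens the a.s. bound and leaves the series unchanged. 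I expect the genuine obstacle to be the third paragraph: recognizing that the partition must grow geometrically and checking that the inflation of $c_1(k)$ by the growing blocks is exactly neutralized, so that both entropy series reduce to $\sum k^{-p}$ and converge precisely in the stated range $p>1$.
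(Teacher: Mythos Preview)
Your proposal is correct and follows essentially the same route as the paper: apply Theorem~\ref{Th2.4} via Theorem~\ref{Th2.3} to the Gaussian field $V$ (so $\alpha=\beta=2$, $\gamma=1$), read off conditions (i)--(ii) from the estimates \eqref{3.5} and \eqref{3.7} of Theorem~\ref{bounds}, choose the geometric partition $b_k=e^k$, and check that both series $\widetilde C$ and $\widetilde S$ reduce to multiples of $\sum k^{-p}$. Your discussion of why the geometric partition is forced and of the exact power cancellation in $\widetilde S$, as well as your remark on patching the monotonicity of $f$ near the origin, are more explicit than the paper's own argument but do not depart from it.
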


\begin{proof}
We apply Theorem \ref{Th2.4} and  Theorem \ref{Th2.3}. Condition $(i)$ holds since $V(t, x)$ is continuous, condition $(ii)$ holds in view of \eqref{3.5}.

Consider conditions $(iii)$ and $(iv)$. Let $f(t) = (t^\frac{H}{2}|\log t|^p) \vee 1$ for $t > 0$ and some $p >1$. Let us choose $b_k = e^k, k \ge 0$.

Then $f_k = b_k^\frac{H}{2}(\log b_k)^p = e^{k \frac{H}{2}}(\log e^k)^p = e^{k \frac{H}{2}} k^p$ and we have 
$$\widetilde C = A(H) \sum_{k = 0}^\infty \frac{b_{k+1}^\frac{H}{2}}{f_k} = A(H)\left( e^\frac{H}{2} + \sum_{k = 1}^\infty \frac{e^\frac{H}{2}}{k^p}\right) = A(H) e^\frac{H}{2} \left( 1 + \sum_{k = 1}^\infty \frac{1}{k^p}\right) < \infty.$$

Consider
$$\widetilde S = (A(H))^\frac{1}{2} \sum_{k = 0}^\infty \frac{b_{k+1}^\frac{H}{4} c_1(k)}{f_k} =  (A(H))^\frac{1}{2} \sum_{k = 0}^\infty \frac{e^{(k+1) \frac{H}{4}} c_1(k)}{e^{k \frac{H}{2}} k^p},$$
where $c_1(k) =  \frac{2}{H} e^{k \frac{H}{4}}  (\frac{e - 1}{2})^\frac{H}{4} + A^\frac{H}{2},$ and we can see that $\widetilde S < \infty$.

Therefore, conditions of Theorem \ref{Th2.4}  hold true, and applying this theorem with $f(t) = (t^\frac{H}{2}|\log t|^p) \vee 1$ we obtain the statement of Theorem \ref{Th3.3}.

\end{proof}

We consider now an assumption on the initial condition which can be used instead of assumption A.2.

$\bf{A.3.}$ The process 
 $u_0(x), x \in \mathbb{R}$  is a real, measurable, mean-square continuous stationary stochastic process.

Let $B(x), x \in \mathbb{R},$ is a covariance function of the process $u_0(x), x \in \mathbb{R}$, with the representation 
\begin{equation}\label{4.3}
B(x) = \int_\mathbb{R} \cos(\lambda x) dF(\lambda),
\end{equation}
where $F(\lambda)$ is a spectral measure, and for the process itself we can write the spectral representation
\begin{equation}\label{4.4} 
u_0(x) = \int_\mathbb{R} e^{i \lambda x} Z(d\lambda),
\end{equation}

The stochastic integral is considered as $L_2(\Omega)$ integral. Orthogonal random measure $Z$ is such that $\E|Z(d\lambda)|^2 = F(d\lambda)$. 

Then the field $\omega$ can be writen in the form
\begin{equation}\label{ex2}
\omega(t, x) = \int_\mathbb{R} \exp\Big\{ i\lambda x - \mu t \lambda^2 \Big\} Z(d\lambda)\end{equation}
and its covariance function has the following representation 
\begin{equation}\label{Cov}
Cov \Big( \omega(t, x), \omega(s, y)\Big) = \int_\mathbb{R} \exp\Big\{i \lambda (x - y) - \mu \lambda^2(t - s) \Big\}F(d\lambda)\end{equation}
(see, \cite{HS}).

\begin{theorem}\label{pr1} 
Let assumption A.3 holds. Then
\begin{equation}\label{F}
\|\omega(t, x)\|_{L^2} \le  \Big(\int_\mathbb{R}F(d\lambda)\Big)^{\frac{1}{2}},
\end{equation}
and if for some $\varepsilon\in(0,\frac{1}{2}]$
\begin{equation}\label{spectral}c^2(\varepsilon):=\int_\mathbb{R}\lambda^{2\varepsilon}F(d\lambda)<\infty,\end{equation}  
then the following estimate holds:
\begin{equation}\label{incr_ex2}
\| \omega(t, x) - \omega(s, y)\|_{L^2}\le  c(\varepsilon) \big( 4^{1-\varepsilon}|x-y|^{2\varepsilon} +  |t-s|^{\varepsilon}\big)^{1/2}.
\end{equation}
\end{theorem}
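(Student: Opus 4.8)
The plan is to reduce everything to deterministic integrals against the spectral measure $F$ by exploiting the orthogonality of the random measure $Z$, namely the isometry $\E\big|\int_\R g(\lambda)\,Z(d\lambda)\big|^2=\int_\R |g(\lambda)|^2\,F(d\lambda)$ valid for deterministic kernels $g$. Applying this to \eqref{ex2} with $g(\lambda)=e^{i\lambda x-t\lambda^2}$ gives $\|\omega(t,x)\|_{L^2}^2=\int_\R e^{-2t\lambda^2}\,F(d\lambda)$, and since $e^{-2t\lambda^2}\le 1$ for $t\ge 0$ this is at most $\int_\R F(d\lambda)$, which is exactly \eqref{F}. The whole difficulty thus lies in \eqref{incr_ex2}, and there the strategy is to obtain a clean pointwise bound on the modulus of the spectral kernel of the increment and only then integrate.

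First I would write, using the same isometry,
\begin{equation*}
\|\omega(t,x)-\omega(s,y)\|_{L^2}^2=\int_\R \big|e^{i\lambda x-t\lambda^2}-e^{i\lambda y-s\lambda^2}\big|^2\,F(d\lambda),
\end{equation*}
and expand the integrand. Writing out the square of the modulus and using $2\cos\vartheta=e^{i\vartheta}+e^{-i\vartheta}$ leads to the identity
\begin{equation*}
\big|e^{i\lambda x-t\lambda^2}-e^{i\lambda y-s\lambda^2}\big|^2=\big(e^{-t\lambda^2}-e^{-s\lambda^2}\big)^2+2e^{-(t+s)\lambda^2}\big(1-\cos(\lambda(x-y))\big),
\end{equation*}
which cleanly separates a purely temporal term from a purely spatial one. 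This decomposition is the crucial structural step: it is what allows the two increments $|t-s|$ and $|x-y|$ to be estimated independently and to appear additively, exactly as in the claimed bound.

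Next I would estimate the two terms by elementary scalar inequalities, engineered to produce the factor $|\lambda|^{2\varepsilon}$ that will combine with the spectral condition \eqref{spectral}. For the spatial term I would use $e^{-(t+s)\lambda^2}\le 1$, the identity $1-\cos\vartheta=2\sin^2(\vartheta/2)$, and the bound $|\sin u|\le |u|^\varepsilon$ (valid for $\varepsilon\in(0,1]$, since $|\sin u|\le\min(|u|,1)$), which yields $2(1-\cos(\lambda(x-y)))\le 4^{1-\varepsilon}|\lambda|^{2\varepsilon}|x-y|^{2\varepsilon}$. For the temporal term I would take without loss of generality $t\ge s$, factor $e^{-t\lambda^2}-e^{-s\lambda^2}=-e^{-s\lambda^2}(1-e^{-(t-s)\lambda^2})$, use $e^{-2s\lambda^2}\le 1$ and $(1-e^{-x})^2\le 1-e^{-x}$ for $x\ge 0$, and finally $1-e^{-x}\le x^\varepsilon$ (again valid for $\varepsilon\in(0,1]$ by $1-e^{-x}\le\min(1,x)$), giving $(e^{-t\lambda^2}-e^{-s\lambda^2})^2\le|t-s|^\varepsilon|\lambda|^{2\varepsilon}$ (here the diffusion coefficient is normalised to one). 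Adding the two estimates gives the pointwise bound $\big|e^{i\lambda x-t\lambda^2}-e^{i\lambda y-s\lambda^2}\big|^2\le |\lambda|^{2\varepsilon}\big(4^{1-\varepsilon}|x-y|^{2\varepsilon}+|t-s|^\varepsilon\big)$, and integrating against $F$ and invoking $c^2(\varepsilon)=\int_\R\lambda^{2\varepsilon}F(d\lambda)$ produces \eqref{incr_ex2} after taking square roots.

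The routine parts are the expansion and the two scalar inequalities; the step that requires the most care, and where the precise constant $4^{1-\varepsilon}$ is won or lost, is the interpolation $|\sin u|\le|u|^\varepsilon$ together with the bookkeeping of the factor $2^{-2\varepsilon}$ coming from the half-angle $\lambda(x-y)/2$. I would therefore carry the constants explicitly through the spatial estimate rather than absorbing them, so that the final $4^{1-\varepsilon}$ matches the statement exactly.
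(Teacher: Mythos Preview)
Your proof is correct and follows essentially the same route as the paper: both use the $L^2$-isometry for $Z$, split $|e^{i\lambda x-t\lambda^2}-e^{i\lambda y-s\lambda^2}|^2$ into a temporal piece $(e^{-t\lambda^2}-e^{-s\lambda^2})^2$ and a spatial piece $4\sin^2(\tfrac12\lambda(x-y))$, and then apply the interpolation bounds $\min(a,1)\le a^{\varepsilon}$ to each to extract the common factor $|\lambda|^{2\varepsilon}$. The only cosmetic difference is that you write the decomposition as an exact identity (with the factor $e^{-(t+s)\lambda^2}$ retained and then bounded by $1$), whereas the paper passes directly to the inequality; the scalar estimates and the resulting constant $4^{1-\varepsilon}$ coincide.
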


\begin{proof} We have
$$\E\big( \omega(t, x) - \omega(s, y)\big)^2 = \int_\mathbb{R} |b(\lambda)|^2 F(d\lambda),$$
where 
$$b(\lambda) = \exp\{i\lambda x\} \exp\{-\mu \lambda^2 t\} - \exp\{i\lambda y\} \exp\{-\mu \lambda^2 s\},$$
and we can estimate
\begin{align*}|b(\lambda)|^2 &\le \Big(1 - \exp\Big\{- \mu \lambda^2 |t - s| \Big\} \Big)^2 + 4\sin^2 \Big(\frac{1}{2}\lambda (x - y)\Big) \\
&\le \Big(\min(\lambda^2|t-s|, 1)\Big)^2+4\min\Big(\frac{1}{2}|\lambda| |x-y|, 1\Big)^2\\
&\le \Big(\lambda^2|t-s|\Big)^{2\varepsilon_1}
+4\Big(\frac{1}{2}|\lambda| |x-y|\Big)^{2\varepsilon_2}
\end{align*}
for any $\varepsilon_1, \varepsilon_2 \in (0,1]$. Let us choose $\varepsilon:=\varepsilon_1=\varepsilon_2/2$, $\varepsilon \in (0,1/2]$, and suppose $\int_\mathbb{R}\lambda^{2\varepsilon}F(d\lambda)<\infty$. Then we can write the bound:
\begin{align*}\int_\mathbb{R} |b(\lambda)|^2 F(d\lambda) \le 
\Big(\int_\mathbb{R}\lambda^{2\varepsilon}F(d\lambda)\Big) \big(4^{1-\varepsilon}|x-y|^{2\varepsilon}+|t-s|^{\varepsilon}\big),
\end{align*}
which implies \eqref{incr_ex2}.The estimate \eqref{F} follows from \eqref{Cov}.
\end{proof}

In view of Theorem \ref{pr1}, under assumption A.3 and assuming $u_0$ to be strictly $\varphi$-sub-Gaussian, we can write the estimate for the tail distribution of supremum of $\omega(t,x)$ which is analogous to \eqref{supomega}, where the constants $c_0$ and $c_\omega$ will come now from  \eqref{F}, \eqref{spectral}.

\medskip
In the example below we present the process which can be used as initial condition, for which \eqref{spectral} is satisfied.

\begin{example}
Let  the initial condition $u_0(x)$, $x\in \R$, be a Gaussian stationary process with the spectral density
\begin{equation}\label{7.1}
f(\lambda)=\frac{\sigma^2}{(1+\lambda^2)^{2\alpha}}, \ \lambda\in\R.
\end{equation}
The corresponding covariance function is of the form:
\begin{equation}\label{7.2}
B_\eta(x)=\frac{\sigma^2}{\sqrt\pi \Gamma(2\alpha)}\Big(\frac{|x|}{2}\Big)^{2\alpha-1/2}K_{2\alpha-1/2}(|x|), \ x\in\R,
\end{equation}
where $K_\nu$ is the modified Bessel function of the second kind, in particular,  $K_{1/2}(x)=\sqrt{\frac{\pi}{2x}}e^{-x}$. Covariances \eqref{7.2} constitute the so-called Mat\'ern class, a parameter $\nu=2\alpha-1/2>0$ controls the level of smoothness of the stochastic process. 

Note that the Gaussian stochastic process with the above covariance and spectral density can be obtained as solution to the following fractional partial differential equation:
\begin{equation*}\label{fracPDE}
\Big(1-\frac{d^2}{dx^2}\Big)^{\alpha}\eta(x)=w(x), \ x\in\R,
\end{equation*}
with $w$ being a white noise: $\E w(x)=0$ and $\E w(x)w(y)=\sigma^2\delta(x-y)$ (see, e.g. \cite[Thm. 3.1]{DOS}).

The Mat\'ern model is popular in spatial statistics and modeling random fields (with corresponding adjustment of \eqref{7.2} for $n$-dimensional case). The relation between the spatial Mat\'ern covariance model and stochastic partial differential equation 
$
(\mu-\Delta)^{\alpha}\eta(x)=w(x), \ x\in\R^n,
$
was established by Whittle in 1963 and since then has been widely used in various applied and theoretical contexts.

For the stationary initial condition $u_0$ with spectral density \eqref{7.1} the condition \eqref{spectral} holds and we are able to calculate the constant $c(\varepsilon)$ defined in \eqref{spectral}. 
We have 
$$
\int_\R \frac{\lambda^{2\varepsilon}}{(1+\lambda^2)^{2\alpha}}=\int_0^\infty \frac{t^{\varepsilon+1/2-1}}{(1+t)^{\varepsilon+1/2+2\alpha-\varepsilon-1/2}}\, dt = {\cal {B}}(\varepsilon+1/2, 2\alpha-\varepsilon-1/2),
$$
where ${\cal B}$ is Beta-function, $\varepsilon\in(0,1/2]$, $2\alpha-\varepsilon-1/2>0$ and we used the formula $\int_0^\infty \frac{t^{\mu-1}}{(1+t)^{\mu+\nu}}\, dt = {\cal {B}}(\mu, \nu)$.

Therefore, in this case we obtain $c^2(\varepsilon)=\sigma^{2}{\cal {B}}(\varepsilon+1/2, 2\alpha-\varepsilon-1/2)$. In particular, having in \eqref{7.1} $\alpha>1$ and choosing $\varepsilon=1/2$ we get $c(1/2)=\sigma^2 \frac{1}{\alpha-1}$.

\end{example}

{\small
\baselineskip=0.7\baselineskip

}	

\end{document}